\documentclass{article}

\usepackage{amstext}
\usepackage{amssymb}
\usepackage{amsfonts}
\usepackage{amsmath}
\usepackage{amsthm}

\usepackage{stmaryrd}

\usepackage{enumitem}

\usepackage{bussproofs}

\usepackage{tikz-cd}
\tikzset{
  symbol/.style={
    draw=none,
    every to/.append style={
      edge node={node [sloped, allow upside down, auto=false]{$#1$}}}
  }
}

\usepackage{comment}

\usepackage{xcolor} 

\theoremstyle{plain}

\newtheorem{Theorem}{Theorem}

\newtheorem{theorem}[Theorem]{Theorem}
\newtheorem{definition}[Theorem]{Definition}
\newtheorem{lemma}[Theorem]{Lemma}

\def\squareforqed{\hbox{\rlap{$\sqcap$}$\sqcup$}}
\def\qed{\ifmmode\squareforqed\else{\unskip\nobreak\hfil
\penalty50\hskip1em\null\nobreak\hfil\squareforqed
\parfillskip=0pt\finalhyphendemerits=0\endgraf}\fi}

\def\cal#1{{\mathcal #1}}

\def\bpi#1{\mathbf{\Pi}^0_{#1}}

\def\QCB{{{\bf QCB}_0}}
\def\IN{{\mathbb N}}

\def\sierp{{\mathbb S}}

\def\I#1{{{\mathrm{\mathbf I}}(#1)}}

\def\PO{{\mathrm{\bold O}}}

\def\calC{{\cal C}}
\def\calD{{\cal D}}
\def\calF{{\cal F}}
\def\calG{{\cal G}}

\def\calL{{\cal L}}

\def\calS{{\cal S}}
\def\calT{{\cal T}}

\def\calX{{\cal X}}

\newcommand{\ODS}{{\mathsf{ODS}}}
\newcommand{\CHS}{{\mathsf{CHS}}}
\def\QCB{{{\bf QCB}_0}}

\newcommand{\y}{{\mathsf{\bf d}}}

\newcommand{\Mor}{\mathsf{Mor}}
\newcommand{\Obj}{\mathsf{Obj}}

\def\int{{\mathrm{\mathbf{int}}}}

\def\wayabovearrow{\rlap{\raise-.25ex\hbox{$\shortuparrow$}}\raise.25ex\hbox{$\shortuparrow$}}
\def\waybelowarrow{\rlap{\raise.25ex\hbox{$\shortdownarrow$}}\raise-.25ex\hbox{$\shortdownarrow$}}

\def\token#1{\lceil#1\rceil}

\makeatletter
\DeclareFontFamily{OMX}{MnSymbolE}{}
\DeclareSymbolFont{MnLargeSymbols}{OMX}{MnSymbolE}{m}{n}
\SetSymbolFont{MnLargeSymbols}{bold}{OMX}{MnSymbolE}{b}{n}
\DeclareFontShape{OMX}{MnSymbolE}{m}{n}{
    <-6>  MnSymbolE5
   <6-7>  MnSymbolE6
   <7-8>  MnSymbolE7
   <8-9>  MnSymbolE8
   <9-10> MnSymbolE9
  <10-12> MnSymbolE10
  <12->   MnSymbolE12
}{}
\DeclareFontShape{OMX}{MnSymbolE}{b}{n}{
    <-6>  MnSymbolE-Bold5
   <6-7>  MnSymbolE-Bold6
   <7-8>  MnSymbolE-Bold7
   <8-9>  MnSymbolE-Bold8
   <9-10> MnSymbolE-Bold9
  <10-12> MnSymbolE-Bold10
  <12->   MnSymbolE-Bold12
}{}

\let\llangle\@undefined
\let\rrangle\@undefined
\DeclareMathDelimiter{\llangle}{\mathopen}%
                     {MnLargeSymbols}{'164}{MnLargeSymbols}{'164}
\DeclareMathDelimiter{\rrangle}{\mathclose}%
                     {MnLargeSymbols}{'171}{MnLargeSymbols}{'171}
\makeatother

\def\name#1{{\llangle{#1}\rrangle}}

\begin{document}


\title{On the completeness of the countable fragment of geometric logic}

\author{Matthew de Brecht}
\date{Graduate School of Human and Environmental Studies\\ Kyoto University, Japan\\\texttt{matthew@i.h.kyoto-u.ac.jp}}

\maketitle

\begin{abstract}
We give self contained proofs of the completeness of countable $\sigma$-coherent theories, for both propositional and predicate logic. By $\sigma$-coherent logic we mean the fragment of geometric logic that only allows countable signatures and countably infinite disjunctions. In the propositional case, each theory determines a quasi-Polish space of models, and formulas (up to provable equivalence) are interpreted as continuous functions from the space of models to the Sierpinski space.  In the predicate case, each theory determines a quasi-Polish category of models, and formulas (up to provable equivalence) are interpreted as continuous functors from the category of models to the quasi-Polish category of overt discrete quasi-Polish spaces. We further extend this analogy by investigating a notion of ``sobriety'' for certain topological categories.
\end{abstract}


\section{Introduction}

This paper is a continuation of \cite{dbr26} that aims to clarify its connections with $\sigma$-coherent logic, which is the fragment of geometric logic that only allows countable signatures and countably infinite disjunctions. 

In Section~2 we give a self contained proof of a completeness theorem for countable propositional $\sigma$-coherent theories, which is equivalent to the spatiality of countably presented frames. This result is originally due to Fourman and Grayson \cite{FG82}, and proved again later by Heckmann \cite{H15} using very different methods. Our approach is closer to Heckmann's, but presented in a logical framework. To briefly summarize, given a countable propositional $\sigma$-coherent theory $\calT$, there is a quasi-Polish space $X_\calT$ of models of $\calT$, such that logical formulas $\varphi$ can be identified (up to $\calT$-provable equivalence of formulas) with continuous functions $\llbracket \varphi\rrbracket \colon X_\calT \to \sierp$, where $\sierp$ is the Sierpinski space. From this perspective, the spatiality result says that the function space $\sierp^{X_\calT}$ is isomorphic to the frame presented by $\calT$.

Then in Section~3 we give a self-contained completeness proof for countable predicate $\sigma$-coherent theories, which already follows from Chen's conceptual completeness result \cite{Chen19} (see also \cite{Chen25}) and Lopez-Escobar's completeness theorem \cite{LE65}. We present the result using (topologically) continuous functors\footnote{As in \cite{dbr26}, we use \emph{continuous functor} in the topological sense, and do not use the term in the categorical sense of preserving limits.} instead of  \'{e}tale spaces (based on the equivalence result in \cite{dbr26}), and our proof of the completeness theorem is a generalization of Heckmann's proof. To briefly summarize, given a countable predicate $\sigma$-coherent theory $\calT$, there is a quasi-Polish category $\calX_\calT$ of models of $\calT$, such that logical formulas $\varphi$ can be identified (up to $\calT$-provable equivalence of formulas) with continuous functors $\llbracket \varphi\rrbracket \colon \calX_\calT \to \ODS$, where $\ODS$ is the quasi-Polish category of overt discrete quasi-Polish spaces. From this perspective Chen's conceptual completeness result says that the functor category $\ODS^{\calX_\calT}$ is equivalent to the $\sigma$-pretopos presented by $\calT$. 

In Section~4, we formulate well-known applications to classical first-order logic, Heyting-valued models, and forcing within our framework. In Section~5 we extend the analogy between frames and $\sigma$-pretoposes by defining ``sobriety'' for certain topological categories.

We assume the reader has read \cite{dbr26}, and we will use the same notation and definitions here.\footnote{The only exception is that in \cite{dbr26} we used the terms \emph{$\omega_1$-coherent logic} and \emph{$\omega_1$-pretopos}, whereas here we will use the terms \emph{$\sigma$-coherent logic} and \emph{$\sigma$-pretopos}. } We refer to Johnstone \cite{J02} and Vickers \cite{V07} for references on geometric logic and topos theory, and refer to \cite{dbr,dbr20} for information on quasi-Polish spaces.

\section{Propositional $\sigma$-coherent Logic}

In this section we show that the frame of opens of quasi-Polish spaces correspond to the Lindenbaum algebras of countable propostional $\sigma$-coherent theories. This is essentially equivalent to the fact that countably presented frames are spatial, which was first shown by Fourman and Grayson \cite{FG82} and again shown by Heckmann \cite{H15} with a different proof using the Baire categegory theorem. Our proof closely follows Heckmann's proof, except we present it within a logical framework rather than an algebraic one.

\subsection{Syntax and inference rules}

Let $V$ be a countable set of propositional variables. The \emph{propositional $\sigma$-coherent formulas} (or just \emph{formulas}) over $V$ are defined inductively as follows:
\begin{itemize}
\item
$\bot$  is a formula.
\item
$\top$ is a formula.
\item
If $p\in V$ then $p$ is a formula.
\item
If $\varphi$ and $\psi$ are formulas then $\varphi\wedge \psi$ is a formula.
\item
If $\varphi_i$ is a formula for each $i\in \IN$, then $\bigvee_{i\in \IN} \varphi_i$ is a formula.
\end{itemize}
For simplicity we do not include binary joins, but $\varphi\vee\psi$ can be represented by the formula $\bigvee_{i\in\IN} \varphi_i$ with $\varphi_0 = \varphi$ and $\varphi_1 = \psi$ and $\varphi_i = \bot$ for $i>1$. For a finite set $F=\{\varphi_1,\ldots,\varphi_n\}$ of formulas, we write $\bigwedge F$ as an abbreviation for $\varphi_1 \wedge \cdots \wedge \varphi_n$. If $F$ is the empty set, then we identify $\bigwedge F$ with $\top$.

Below are the inference rules for propositional $\sigma$-coherent logic (see D1.3.1 of \cite{J02} or Section~2.2 of \cite{V07}):
\begin{itemize}
\item
$\varphi \vdash \varphi$ (identity),\hspace{0.2cm}
\AxiomC{$\varphi \vdash \psi$}
\AxiomC{$\psi \vdash \chi$}
\RightLabel{(cut)}
\BinaryInfC{$\varphi \vdash \chi$}
\DisplayProof
\item
$\varphi\wedge \bigvee_{i\in \IN} \psi_i \vdash \bigvee_{i\in \IN} (\varphi\wedge\psi_i)$ (distributivity)
\item
$\varphi\vdash \top$,\hspace{0.2cm} $\varphi\wedge\psi \vdash \varphi$,\hspace{0.2cm} $\varphi\wedge\psi \vdash \psi$,\hspace{0.2cm}
\AxiomC{$\chi \vdash \varphi$}
\AxiomC{$\chi \vdash \psi$}
\RightLabel{($\wedge I$)}
\BinaryInfC{$\chi \vdash \varphi\wedge \psi$}
\DisplayProof
\item
$\bot\vdash\psi$,\hspace{0.2cm} $\varphi_j\vdash \bigvee_{i\in \IN} \varphi_i$ (for $j\in \IN$),\hspace{0.2cm}
\AxiomC{$\varphi_i \vdash \psi$ (for each $i\in \IN$)}
\RightLabel{($\vee I$)}
\UnaryInfC{$\bigvee_{i\in I} \varphi_i \vdash \psi$}
\DisplayProof
\end{itemize}

A sequent $\varphi\vdash\psi$ is \emph{valid} if it can be derived from the above inference rules. We write $\varphi\not\vdash\psi$ if the sequent is \emph{not} valid. Here is a sample derivation showing the validity of distributivity in the other direction: 
\begin{center}
\AxiomC{$\varphi\wedge\psi_j \vdash\varphi$}
\AxiomC{$\varphi\wedge\psi_j \vdash \psi_j$}
\AxiomC{$\psi_j\vdash\bigvee_{i\in I} \psi_i$}
\RightLabel{(cut)}
\BinaryInfC{$\varphi\wedge\psi_j \vdash\bigvee_{i\in I} \psi_i$} 
\RightLabel{($\wedge I$)}
\BinaryInfC{$\varphi\wedge\psi_j \vdash\varphi\wedge\bigvee_{i\in I} \psi_i$} 
\RightLabel{($\vee I$)}
\UnaryInfC{$\bigvee_{i\in I} (\varphi\wedge\psi_i) \vdash\varphi\wedge\bigvee_{i\in I} \psi_i$} 
\DisplayProof
\end{center}

A countable set $\calT$ of sequents of the form $\varphi\vdash \psi$ (with variables from a fixed countable set $V$) is called a \emph{countable propositional $\sigma$-coherent theory}, and the elements of $\calT$ are called \emph{axioms}. We say that $\varphi\vdash \psi$ is a \emph{theorem of $\calT$}, or \emph{derivable from $\calT$}, if there is a derivation of $\varphi\vdash \psi$ from the inference rules and the axioms in $\calT$. We write $\varphi\vdash^{\calT} \psi$ to denote a theorem of $\calT$.

A \emph{basic formula}\footnote{These are called Horn formulas in \cite{J02}.} is a formula of the form $\bigwedge F$, where $F\subseteq V$ is finite (if $F$ is empty then we identify $\bigwedge F$ with $\top$). Since we assume $V$ is countable, there are only countably many basic formulas. It will often be convenient to assume formulas are in a ``normal form" in the following sense.

\begin{lemma}[\cite{J02} Lemma~D1.3.8]\label{lem:normalform}
Every propositional $\sigma$-coherent formula is provably equivalent to a formula of the form $\bigvee_{i\in\IN}\rho_i$, where each $\rho_i$ is either a basic formula or $\bot$.
\qed
\end{lemma}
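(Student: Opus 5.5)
We prove the statement by structural induction on formulas, showing that every formula $\varphi$ is provably equivalent (i.e.\ $\varphi\vdash\psi$ and $\psi\vdash\varphi$ are both valid) to some $\bigvee_{i\in\IN}\rho_i$ with each $\rho_i$ basic or $\bot$. Call such a formula a \emph{normal form}.

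The base cases are immediate. For $\bot$ take all $\rho_i=\bot$; both directions follow from $\bot\vdash\psi$ and ($\vee I$). For $\top$ take $\rho_0=\top=\bigwedge\emptyset$ and $\rho_i=\bot$ for $i>0$; then $\top\vdash\bigvee_i\rho_i$ is an instance of $\varphi_j\vdash\bigvee_i\varphi_i$, and the converse is $\varphi\vdash\top$. For $p\in V$ take $\rho_0=p$ and the rest $\bot$, arguing in the same way. Throughout we use the routine fact that provable equivalence is a congruence for $\wedge$ and $\bigvee$: if $\varphi\dashv\vdash\varphi'$ and $\psi\dashv\vdash\psi'$ then $\varphi\wedge\psi\dashv\vdash\varphi'\wedge\psi'$, using the projections, cut and ($\wedge I$). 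Similarly, $\varphi_i\dashv\vdash\varphi_i'$ for all $i$ gives $\bigvee\varphi_i\dashv\vdash\bigvee\varphi_i'$, using the injections, cut and ($\vee I$).

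\emph{Countable joins.} Suppose each $\varphi_n\dashv\vdash\bigvee_{i}\rho_{n,i}$. Fix a bijection $\langle\cdot,\cdot\rangle\colon\IN\times\IN\to\IN$ and set $\sigma_{\langle n,i\rangle}=\rho_{n,i}$. We show $\bigvee_n\bigvee_i\rho_{n,i}\dashv\vdash\bigvee_k\sigma_k$. In one direction, each $\rho_{n,i}\vdash\bigvee_k\sigma_k$ is an injection, so two applications of ($\vee I$) finish. In the other direction, $\sigma_{\langle n,i\rangle}=\rho_{n,i}\vdash\bigvee_i\rho_{n,i}\vdash\bigvee_n\bigvee_i\rho_{n,i}$ by two injections and cut, and then ($\vee I$) finishes. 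Combining this with congruence gives the case.

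\emph{Binary meets.} This is the main step. Suppose $\varphi\dashv\vdash\bigvee_i\rho_i$ and $\psi\dashv\vdash\bigvee_j\tau_j$. By congruence and two uses of distributivity (one applied after commuting the conjunction, which is derivable from the projections and ($\wedge I$)), together with the derived converse direction shown in the paper, we get
$$\varphi\wedge\psi\dashv\vdash\bigvee_i\bigl(\rho_i\wedge\textstyle\bigvee_j\tau_j\bigr)\dashv\vdash\bigvee_i\bigvee_j(\rho_i\wedge\tau_j).$$
We then flatten this double join as in the previous case. It remains to show that each $\rho_i\wedge\tau_j$ is provably equivalent to a basic formula or to $\bot$. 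If either conjunct is $\bot$, then the meet is equivalent to $\bot$, via the projection to $\bot$ in one direction and $\bot\vdash\cdot$ in the other. Otherwise $\bigwedge F\wedge\bigwedge G\dashv\vdash\bigwedge(F\cup G)$. This holds because each side proves every variable in $F\cup G$ by projections, and $\top$ is absorbed using $\varphi\vdash\top$. Applying ($\wedge I$) repeatedly then yields both directions, regardless of the bracketing or duplicates in the abbreviation $\bigwedge$.

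The only real care needed is the bookkeeping with the meet case: commutativity, associativity and idempotence of $\wedge$ at the level of provable equivalence.
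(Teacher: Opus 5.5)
Your proof is correct and is essentially the standard argument: the paper gives no proof of its own, deferring to Johnstone's Lemma D1.3.8, which proceeds by the same structural induction, using distributivity to push conjunctions inside joins and then flattening the iterated joins. Your handling of the $\bot$ cases and of $\bigwedge F\wedge\bigwedge G\dashv\vdash\bigwedge(F\cup G)$ just spells out bookkeeping that the citation leaves implicit.
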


\subsection{Space of models}

For topological spaces $X$ and $Y$, we write $Y^X$ for the space of all continuous functions $f\colon X\to Y$, equipped with the compact-open topology. For the spaces we use in this paper, this agrees with the topology on the exponential object $Y^X$ in the cartesian closed category $\QCB$ \cite{BSS07}.

Let $V$ be a countable set of variables, and $\sierp=\{\bot,\top\}$ Sierpinski space. We write $\sierp^V$ for the space of continuous functions from $V$ to $\sierp$, where we consider $V$ to have the discrete topology.

For $M\in \sierp^V$, we define the satisfaction relation $M\vDash \varphi$ inductively as follows:
\begin{enumerate}
\item
$M\vDash \bot$ never holds.
\item
$M\vDash \top$ always holds.
\item
If $p\in V$, then $M\vDash p$ if and only if $M(p)=\top$.
\item
$M\vDash \varphi\wedge \psi$ if and only if $M\vDash \varphi$ and $M\vDash \psi$.
\item
$M\vDash \bigvee_{i\in \IN} \varphi_i$ if and only if $M \vDash \varphi_i$ for some $i\in \IN$.
\end{enumerate}

Note that $\llbracket \varphi \rrbracket = \{ M\in\sierp^V\mid M\vDash \varphi\}$ is an open subset of $\sierp^V$, and sets of the form $\llbracket p \rrbracket$ for $p\in V$ form a subbasis for $\sierp^V$. Furthermore, the open sets $\llbracket\rho\rrbracket$ (for basic formulas $\rho$) form a basis for the topology on $\sierp^V$. We will sometimes write $\neg\llbracket\varphi\rrbracket$ for the complement of $\llbracket\varphi\rrbracket$, which is a closed subset of $\sierp^V$.

Let $\calT$ be a countable propositional $\sigma$-coherent theory with variables from the countable set $V$. The \emph{space of $\calT$-models} is the subspace $X_\calT \subseteq \sierp^V$ defined as:
\[ X_\calT=\bigcap\{ \neg\llbracket\varphi\rrbracket\cup\llbracket\psi\rrbracket \mid \varphi\vdash \psi\text{ is an axiom of } \calT\}.\]
It is clear that $X_\calT$ is quasi-Polish, and conversely every quasi-Polish space is homeomorphic to the space of models of some countable theory.

We write $\llbracket \varphi \rrbracket^{\calT} = \{ M\in X_\calT\mid M\vDash \varphi\}$ for the restriction of $\llbracket \varphi \rrbracket$ to $X_\calT$. Note that $\llbracket \varphi \rrbracket^{\calT}$ is open in $X_\calT$, and conversely every open subset of $X_\calT$ is equal to $\llbracket \varphi \rrbracket^{\calT}$ for a suitable formula $\varphi$.

\subsection{Completeness}

Fix a countable set of variables $V$ throughout this section.

\begin{lemma}\label{lem:axiomless_proofs}
A sequent $\varphi \vdash \psi$ is valid if and only if $\llbracket \varphi \rrbracket \subseteq \llbracket \psi \rrbracket$ in $\sierp^V$.
\end{lemma}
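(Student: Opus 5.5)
Soundness is proved first, by induction on the derivation of $\varphi\vdash\psi$. Each rule preserves the property $\llbracket\cdot\rrbracket\subseteq\llbracket\cdot\rrbracket$, because $\llbracket\cdot\rrbracket$ turns $\wedge$ into intersection and $\bigvee$ into countable union, with $\llbracket\bot\rrbracket=\emptyset$ and $\llbracket\top\rrbracket=\sierp^V$. Identity and cut hold by reflexivity and transitivity of $\subseteq$. The conjunction rules hold because intersection is the meet of subsets. The disjunction rules hold because union is the join. Distributivity holds because intersection distributes over unions in the powerset.

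For completeness, suppose $\llbracket\varphi\rrbracket\subseteq\llbracket\psi\rrbracket$. By Lemma~\ref{lem:normalform} and soundness, we may replace $\varphi$ and $\psi$ by provably equivalent normal forms $\bigvee_i\rho_i$ and $\bigvee_j\sigma_j$, where each $\rho_i,\sigma_j$ is basic or $\bot$; the semantics is unchanged. By ($\vee I$) it suffices to prove $\rho_i\vdash\psi$ for each $i$. If $\rho_i=\bot$ this is an axiom. Otherwise $\rho_i=\bigwedge F$ with $F\subseteq V$ finite.

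The key step is to take the model $M_F\in\sierp^V$ defined by $M_F(p)=\top$ exactly when $p\in F$. This is the least element of the basic open $\llbracket\bigwedge F\rrbracket$. Since $M_F\vDash\rho_i$, we get $M_F\in\llbracket\psi\rrbracket$, so $M_F\vDash\sigma_j$ for some $j$. Then $\sigma_j$ is not $\bot$, so $\sigma_j=\bigwedge G$ with $G\subseteq V$ finite. From $M_F\vDash\bigwedge G$ we get $G\subseteq F$. Hence $\bigwedge F\vdash\bigwedge G$ follows from the projection rules and ($\wedge I$); the case $G=\emptyset$ uses $\rho\vdash\top$. Composing with $\sigma_j\vdash\bigvee_j\sigma_j$ by cut gives $\rho_i\vdash\psi$. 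Finally, cut with the provable equivalences to the original formulas yields $\varphi\vdash\psi$.

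The only possible obstacle is the bookkeeping showing that replacing formulas by their normal forms is legitimate. Lemma~\ref{lem:normalform} supplies the provable equivalence, and soundness (already established) guarantees that the semantics is preserved, so this step poses no real difficulty. No Baire category argument is needed, since there are no axioms here. That argument will only become necessary for theories $\calT$.
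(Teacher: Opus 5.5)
Your proof is correct and is essentially the paper's argument: soundness by induction on derivations, and for the converse a reduction to normal form in which the least model $M_F$ of $\llbracket\bigwedge F\rrbracket$ forces some disjunct $\bigwedge G$ of $\psi$ with $G\subseteq F$. The only differences are presentational. You argue directly, whereas the paper argues by contrapositive: it picks a disjunct with $\bigwedge F_i\not\vdash\psi$ and shows $M_F\in\llbracket\varphi\rrbracket\setminus\llbracket\psi\rrbracket$. You also explicitly handle the $\bot$ disjuncts and the case $G=\emptyset$, which the paper leaves implicit.
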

\begin{proof}
If $\varphi \vdash \psi$ is valid then $\llbracket \varphi \rrbracket\subseteq \llbracket\psi\rrbracket$ can be shown by induction on the derivation.

Conversely, assume $\varphi \not\vdash\psi$. Without loss of generality, we can assume $\varphi = \bigvee_{i\in \IN}\bigwedge F_i$ and $\psi = \bigvee_{j\in \IN}\bigwedge G_j$ are in normal form. Then for some $i\in\IN$ we must have $\bigwedge F_i \not\vdash \psi$, since otherwise $(\vee I)$ could be applied to obtain $\varphi\vdash \psi$. Define $M\colon V\to \sierp$ as $M(p)=\top \iff p\in F_i$. Clearly, $M\vDash \varphi$. If there was $k\in\IN$ with $G_k \subseteq F_i$, then we could derive $\bigwedge F_i \vdash \bigwedge G_k$, but since $\bigwedge G_k \vdash \bigvee_{j\in \IN}\bigwedge G_j$ we could apply cut to obtain $\bigwedge F_i \vdash \psi$, a contradiction. Therefore, $M\not\vDash \bigwedge G_j$ for each $j\in\IN$, hence $M\not\vDash \psi$. Therefore, $M\in\llbracket\varphi\rrbracket \setminus \llbracket\psi\rrbracket$, hence $\llbracket\varphi\rrbracket\not\subseteq\llbracket\psi\rrbracket$.
\end{proof}

The following lemma corresponds to Proposition~3.10 of Heckmann \cite{H15}.

\begin{lemma}\label{lem:singleaxiom_proofs}
The sequent $\varphi \vdash \psi$ is derivable from the single axiom $\sigma\vdash\theta$ if and only if $\varphi \vdash \sigma \vee \psi$ and $\varphi \wedge \theta \vdash \psi$ are both valid.
\end{lemma}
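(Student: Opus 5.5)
The ``if'' direction is a direct derivation. Assume $\varphi\vdash\sigma\vee\psi$ and $\varphi\wedge\theta\vdash\psi$ are valid. From the axiom $\sigma\vdash\theta$ we get $\sigma\vee\psi\vdash\theta\vee\psi$ by $(\vee I)$, using the axiom followed by the injection $\theta\vdash\theta\vee\psi$, and $\psi\vdash\theta\vee\psi$. Combining with the first sequent via $(\wedge I)$ gives
\[ \varphi\vdash\varphi\wedge(\theta\vee\psi). \]
Distributivity then yields $\varphi\vdash(\varphi\wedge\theta)\vee(\varphi\wedge\psi)$. Both disjuncts entail $\psi$: the first by hypothesis, the second by projection. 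So $(\vee I)$ and cut give $\varphi\vdash\psi$ from the single axiom.

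For the ``only if'' direction, I would introduce the relation $\varphi\preceq\psi$, defined to mean that both $\varphi\vdash\sigma\vee\psi$ and $\varphi\wedge\theta\vdash\psi$ are valid. I would then show that $\preceq$ contains every valid sequent and the axiom, and is closed under every inference rule. It then contains every sequent derivable from $\sigma\vdash\theta$, by induction on derivations. By Lemma~\ref{lem:axiomless_proofs}, validity coincides with semantic inclusion in $\sierp^V$, so each closure check can be done set-theoretically. Write $A=\llbracket\varphi\rrbracket$ and so on.

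With this translation, $\varphi\preceq\psi$ says $A\subseteq S\cup B$ and $A\cap T\subseteq B$, where $S=\llbracket\sigma\rrbracket$ and $T=\llbracket\theta\rrbracket$. The checks are as follows.
\begin{enumerate}
\item A valid sequent $A\subseteq B$ trivially satisfies both conditions.
\item The axiom itself satisfies them: $S\subseteq S\cup T$ and $S\cap T\subseteq T$.
\item Cut: if $A\subseteq S\cup B$, $A\cap T\subseteq B$, $B\subseteq S\cup C$ and $B\cap T\subseteq C$, then $A\subseteq S\cup C$. Moreover $A\cap T\subseteq B\cap T\subseteq C$.
\item The rule $(\wedge I)$: if $C$ is contained in $S\cup A$ and in $S\cup B$, then $C\subseteq S\cup(A\cap B)$. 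Similarly, $C\cap T$ is contained in $A\cap B$.
\item The rule $(\vee I)$: both conditions are preserved under countable unions of the left-hand sides.
\end{enumerate}
The remaining rules are axiom schemes without hypotheses, so they are valid sequents and fall under the first check.

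I do not expect a serious obstacle. The key choice is to replace syntactic closure arguments by the semantic characterization from Lemma~\ref{lem:axiomless_proofs}. This avoids having to manipulate derivations of $\varphi\vdash\sigma\vee\psi$ by hand in the cut and $(\wedge I)$ cases. In particular, $(\wedge I)$ would otherwise require distributing $(\sigma\vee\varphi)\wedge(\sigma\vee\psi)$ syntactically. The only care needed is to confirm that the induction covers every rule, including distributivity, which holds because every rule other than cut, $(\wedge I)$ and $(\vee I)$ is a hypothesis-free valid sequent.
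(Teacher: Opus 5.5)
Your proof is correct and is essentially the paper's. The ``if'' direction is the same derivation: you apply the axiom inside the disjunction before distributing, whereas the paper applies it to $\varphi\wedge\sigma$ after distributing. For the ``only if'' direction, your invariant ($A\subseteq S\cup B$ and $A\cap T\subseteq B$) is equivalent to $A\cap(\neg S\cup T)\subseteq B$, i.e.\ truth in every model of $\sigma\vdash\theta$, so your closure checks are exactly the soundness induction the paper invokes before using Lemma~\ref{lem:axiomless_proofs} to produce a countermodel.
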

\begin{proof}
Assume $\varphi \vdash \sigma \vee \psi$ is not valid. Then Lemma~\ref{lem:axiomless_proofs} implies $\llbracket\varphi\rrbracket\not\subseteq\llbracket\sigma\vee\psi\rrbracket$. Fix $M\in\sierp^V$ such that $M\vDash \varphi$ but $M\not\vDash \sigma\vee\psi$. Then $M\not\vDash \sigma$, hence $M$ is a model of the axiom $\sigma\vdash\theta$ but $M$ is not a model of $\varphi\vdash\psi$. A similar argument shows that if $\varphi \wedge\theta \vdash \psi$ is not valid then there is a model of $\sigma\vdash\theta$ that does not satisfy $\varphi\vdash\psi$. It follows from the soundness of the proof system that $\varphi \vdash \psi$ is not derivable from the single axiom $\sigma\vdash\theta$.

For the converse, assume $\varphi \vdash \sigma\vee\psi$ and $\varphi\wedge\theta\vdash\psi$ are both valid. Then:
\begin{center}
\AxiomC{$\varphi\vdash\varphi$}
\AxiomC{$\vdots$}
\UnaryInfC{$\varphi \vdash \sigma\vee\psi$}
\BinaryInfC{$\varphi\vdash\varphi\wedge( \sigma\vee\psi)$}
\AxiomC{$\varphi\wedge( \sigma\vee\psi)\vdash (\varphi\wedge\sigma)\vee(\varphi\wedge\psi)$}
\BinaryInfC{$\varphi\vdash (\varphi\wedge\sigma)\vee(\varphi\wedge\psi)$}
\DisplayProof
\end{center}
Furthermore:
\begin{center}
\AxiomC{$\varphi\wedge\sigma\vdash \varphi$}
\AxiomC{$\varphi\wedge\sigma\vdash \sigma$}
\AxiomC{$\sigma\vdash\theta$}
\BinaryInfC{$\varphi\wedge\sigma \vdash \theta$}
\BinaryInfC{$\varphi\wedge\sigma \vdash \varphi\wedge\theta$}
\AxiomC{$\vdots$}
\UnaryInfC{$\varphi\wedge\theta\vdash\psi$}
\BinaryInfC{$\varphi\wedge\sigma \vdash \psi$}
\AxiomC{$\varphi\wedge\psi\vdash\psi$}
\BinaryInfC{$(\varphi\wedge\sigma)\vee(\varphi\wedge\psi)\vdash\psi$}
\DisplayProof
\end{center}
Finally, apply cut to obtain a derivation of $\varphi\vdash\psi$ from the axiom $\sigma\vdash \theta$.
\end{proof}

The following completeness theorem for countable propositional $\sigma$-coherent theories was first shown by Fourman and Grayson \cite{FG82}, and it also corresponds to the spatiality theorem of Heckmann (Theorem~3.13 of \cite{H15}).

\begin{theorem}\label{thrm:prop_complete}
Let $\calT$ be a countable propositional $\sigma$-coherent theory and $\varphi$, $\psi$ propositional $\sigma$-coherent formulas. Then $\varphi \vdash^{\calT}\psi$ if and only if $\llbracket \varphi\rrbracket^\calT \subseteq \llbracket \psi\rrbracket^\calT$.
\end{theorem}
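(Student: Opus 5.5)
Soundness (the ``only if'' direction) goes by induction on derivations: every axiom of $\calT$ holds in every $M\in X_\calT$ by definition of $X_\calT$, and the inference rules preserve inclusion of interpretations, as already checked in Lemma~\ref{lem:axiomless_proofs}. The substance is completeness. Following Heckmann, I would prove the contrapositive using the Baire category theorem: assuming $\varphi\not\vdash^\calT\psi$, I would build a model $M\in X_\calT$ with $M\vDash\varphi$ and $M\not\vDash\psi$.

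First I would enumerate the axioms as $\sigma_n\vdash\theta_n$, repeating each one infinitely often, or padding with trivial axioms $\top\vdash\top$ if $\calT$ is finite. By Lemma~\ref{lem:normalform} I can assume $\varphi=\bigvee_i\rho_i$ is in normal form; then some basic $\rho_i$ satisfies $\rho_i\not\vdash^\calT\psi$, so I may take $\varphi$ itself to be a basic formula $\varphi_0$. The key step is a one-step extension. Suppose $\chi$ is a basic formula with $\chi\not\vdash^\calT\psi$ and $\sigma\vdash\theta$ is an axiom. Then there is a basic $\chi'\vdash\chi$ with $\chi'\not\vdash^\calT\psi$ such that either $\chi'\vdash\theta$ is valid, or $\chi'\wedge\sigma\vdash\bot$ holds in the weak sense that no basic formula below $\chi'$ lies in $\llbracket\sigma\rrbracket$.

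To prove this step I would argue as in Lemma~\ref{lem:singleaxiom_proofs}, but relative to $\calT$. Since $\chi\vdash(\chi\wedge\sigma)\vee\chi$ trivially, I would consider $\chi\wedge\theta$. If $\chi\wedge\theta\not\vdash^\calT\psi$, I write $\theta$ in normal form and pick a disjunct to get a basic $\chi'$ below $\chi\wedge\theta$. Otherwise $\chi\wedge\theta\vdash^\calT\psi$, and then $\chi\wedge\sigma\vdash^\calT\psi$ as well. In that case every model extending $\chi$ that we eventually build must avoid $\sigma$.

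Handling the ``avoid $\sigma$'' requirement is the main obstacle, because falsity of an open condition is not finitary. For this reason I would switch to the Baire viewpoint rather than a naive forcing construction. Let $P$ be the poset of basic formulas $\chi\le\varphi_0$ with $\chi\not\vdash^\calT\psi$. I would build a filter, that is, a model $M$ defined by $M(p)=\top$ iff $p$ occurs in some $\chi$ of the generic sequence, that meets countably many dense sets. Then I would show that $M$ satisfies each axiom and fails $\psi$. Failing $\psi$ is automatic: if $M\vDash\bigwedge G_j$ for a disjunct of $\psi$, then $G_j$ is contained in some finite stage $\chi$, so $\chi\vdash\psi$, a contradiction. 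For an axiom $\sigma_n\vdash\theta_n$ with $\sigma_n=\bigvee_k\bigwedge H_k$, I need that whenever $M\vDash\bigwedge H_k$, some stage forces a disjunct of $\theta_n$. So the dense set for the pair $(n,k)$ should consist of those $\chi$ that either extend a disjunct of $\theta_n$, or satisfy $\chi\wedge\bigwedge H_k\vdash^\calT\psi$.

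Density of these sets comes from the one-step argument. The remaining point is that if $\chi\wedge\bigwedge H_k\vdash^\calT\psi$ and later $M$ contains $H_k$, then some finite stage $\chi''$ contains both $\chi$ and $H_k$, forcing $\chi''\vdash^\calT\psi$, which is a contradiction. Since there are only countably many pairs $(n,k)$, an $\omega$-length sequence meeting every dense set exists. This yields $M\in X_\calT$ with $M\vDash\varphi_0$, hence $M\vDash\varphi$, and $M\not\vDash\psi$, completing the proof.
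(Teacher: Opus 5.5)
Your proof is correct, and it takes a genuinely different, more elementary route than the paper, although both are at bottom Baire-category arguments.

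The paper forms an auxiliary formula, there called $\chi$; call it $\chi^{*}=\bigvee\{\rho\text{ basic}\mid\rho\vdash^{\calT}\psi\}$ here. It then uses the completeness half of Lemma~\ref{lem:axiomless_proofs} and the single-axiom Lemma~\ref{lem:singleaxiom_proofs} to show that each $\neg\llbracket\sigma_i\rrbracket\cup\llbracket\theta_i\rrbracket$ is dense in the closed set $\neg\llbracket\chi^{*}\rrbracket$. Finally it applies the generalized Baire category theorem (Heckmann's extended local Baire property) as a black box.

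You instead run a Rasiowa--Sikorski construction by hand on the countable poset $P$. Your density comes from a purely syntactic dichotomy: either $\chi\wedge\theta_n\not\vdash^{\calT}\psi$ and you refine by a disjunct of $\theta_n$, or $\chi\wedge\sigma_n\vdash^{\calT}\psi$. So you need only soundness, normal forms and elementary derivations.

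The two arguments correspond closely. For basic $\rho$ one has $\rho\vdash^{\calT}\psi$ iff $\llbracket\rho\rrbracket\subseteq\llbracket\chi^{*}\rrbracket$. Hence $P$ is precisely the set of basic formulas below $\varphi_0$ whose opens meet $\neg\llbracket\chi^{*}\rrbracket$. Your use of the requirement that \emph{every} stage stay in $P$, to rule out later entry into $\llbracket\bigwedge H_k\rrbracket$, is the basis-level counterpart of $\neg\llbracket\chi^{*}\rrbracket$ being closed. It is exactly what lets the generic point avoid $\llbracket\sigma_n\rrbracket$. For the same reason the index $k$ is not needed: one dense set per axiom already suffices, namely the $\chi\in P$ that extend a disjunct of $\theta_n$ or satisfy $\chi\wedge\sigma_n\vdash^{\calT}\psi$.

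Your version buys self-containment and an explicit countermodel. The paper's buys modularity and a topological picture that transfers almost verbatim to the predicate case via minimal structures and Lemma~\ref{lem:singleaxiom_proofs_pred2}.

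One small correction: the second alternative of your ``key step'' is false as literally stated. For example, $\chi'\wedge\bigwedge H_k$ is a basic formula below $\chi'$ whose open lies in $\llbracket\sigma\rrbracket$. What you actually obtain, and later use, is weaker: no element of $P$ below $\chi'$ entails $\sigma$, which follows from $\chi'\wedge\sigma\vdash^{\calT}\psi$.
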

\begin{proof}
If $\varphi \vdash\psi$ is derivable from $\calT$, then a proof by induction on the derivation shows that $\llbracket\varphi\rrbracket\cap X_\calT \subseteq \llbracket\psi\rrbracket$.

Conversely, assume that $\varphi \vdash\psi$ is not derivable from $\calT$. Let $(\sigma_i \vdash \theta_i)_{i\in\IN}$ be an enumeration of $\calT$. Let $(\rho_j)_{j\in\IN}$ be an enumeration of all basic formulas satisfying $\rho_j \vdash^{\calT} \psi$, and define
\[ \chi = \bigvee_{j\in\IN} \rho_j.\]
By applying $(\vee I)$ we obtain $\chi \vdash^{\calT} \psi$. Furthermore, assuming without loss of generality $\psi = \bigvee_{k\in\IN} \vartheta_k$ for basic formulas $\vartheta_k$, we have that $\vartheta_k \vdash \psi$ is valid for each $k\in\IN$, which implies each $\vartheta_k$ is equal to some $\rho_j$. Therefore, $\psi \vdash \chi$ is valid.

Clearly, $\llbracket\varphi\rrbracket\not\subseteq \llbracket\chi\rrbracket$, since otherwise Lemma~\ref{lem:axiomless_proofs} would imply $\varphi\vdash\chi$, but then cut would yield $\varphi\vdash^{\calT} \psi$, a contradiction.

Fix $i\in\IN$. Assume $\rho$ is a basic formula such that $\llbracket\rho\rrbracket\cap\neg\llbracket\chi\rrbracket\not=\emptyset$. If $\llbracket\rho\rrbracket\cap\neg \llbracket\chi\rrbracket \cap (\neg\llbracket\sigma_i\rrbracket\cup\llbracket\theta_i\rrbracket)=\emptyset$ then $\llbracket\rho\rrbracket\subseteq \llbracket\sigma_i\rrbracket\cup \llbracket\chi\rrbracket$ and $\llbracket\rho\rrbracket\cap \llbracket\theta_i\rrbracket \subseteq \llbracket\chi\rrbracket$, hence Lemmas~\ref{lem:axiomless_proofs} and \ref{lem:singleaxiom_proofs} imply $\rho\vdash^{\calT} \chi$. But then $\rho\vdash^{\calT} \psi$, hence $\rho = \rho_j$ for some $j\in\IN$. But then $\rho\vdash \chi$, which contradicts $\llbracket\rho\rrbracket\cap\neg\llbracket\chi\rrbracket\not=\emptyset$.

Thus $\neg\llbracket\sigma_i\rrbracket\cup\llbracket\theta_i\rrbracket$ is dense in $\neg\llbracket\chi\rrbracket$ for each $i\in\IN$, hence by the (generalized) Baire Category Theorem\footnote{This is called the \emph{extended local Baire property} by R.~Heckmann (see Corollary~2.25 of Heckmann \cite{H15}). See also Theorem~3.14 of \cite{BG15} and Lemma~5.1 of \cite{d1?}.}
\[X_\calT = \bigcap_{i\in\IN} (\neg\llbracket\sigma_i\rrbracket\cup\llbracket\theta_i\rrbracket)\]
is dense in $\neg\llbracket\chi\rrbracket$. Since $\llbracket\varphi\rrbracket\cap\neg\llbracket\chi\rrbracket\not=\emptyset$ there exists $M\in \llbracket\varphi\rrbracket\cap\neg\llbracket\chi\rrbracket\cap X_\calT$. Clearly, $M\in X_\calT$ and $M\vDash \varphi$ but $M\not\vDash\psi$ because  $\llbracket \psi\rrbracket \subseteq \llbracket \chi\rrbracket$.
\end{proof}

Let $\sierp^{X_\calT}$ be the space of continuous functions from $X_\calT$, which is isomorphic to the frame of opens of $X_\calT$. Since each element of $\sierp^{X_\calT}$ is equal to $\llbracket\varphi\rrbracket^\calT$ for some propositional $\sigma$-coherent formula $\varphi$, the above theorem shows that $\sierp^{X_\calT}$ is isomorphic to the set of propositional $\sigma$-coherent formulas modulo $\calT$-provable equivalence. In other words, $\sierp^{X_\calT}$ is the Lindenbaum algebra of $\calT$.

\section{Predicate $\sigma$-coherent Logic}

This section extends the completeness result of the previous section to predicate logic. To accomplish this, rather than a quasi-Polish \emph{space} of models, we will need a quasi-Polish \emph{category} of models. For simplicity, we assume a single sort, and a relational language (i.e., no function symbols). However this is not really a restriction (see D1.4.9 and D1.4.13 of \cite{J02}).

\subsection{Syntax and inference rules}

We assume a countable supply of variables. A finite list of variables $\vec{x}$, in which each variable occurs only once, is called a \emph{context}. We write $x\in\vec{x}$ if the variable $x$ occurs in $\vec{x}$.  We write $\vec{x},y$ for the extension of $\vec{x}$ with a variable $y$, and $\vec{x},\vec{y}$ for the concatenation of two lists. The length of a list is denoted $len(\vec{x})$. If $\vec{y}$ has the same length as $\vec{x}$, then we say $\vec{y}$ is \emph{compatible} with $\vec{x}$.

\subsubsection{Formulas}

A \emph{language} is a countable set $\calL$ of predicate symbols, each with a given arity. The \emph{predicate $\sigma$-coherent formulas} (or just \emph{formulas}) over $\calL$ are defined inductively as follows:
\begin{itemize}
\item
$\vec{x}.\bot$  is a formula.
\item
$\vec{x}.\top$ is a formula.
\item
If  $x_i,x_j \in \vec{x}$ then $\vec{x}.x_i=x_j$ is a formula.
\item
If $x_{i_k}$ are variables in $\vec{x}$ ($1\leq k \leq n$) and $P\in\calL$ is an $n$-ary predicate symbol, then $\vec{x}.P(x_{i_1},\ldots,x_{i_n})$ is a formula.
\item
If $\vec{x}.\varphi$ and $\vec{x}.\psi$ are formulas then $\vec{x}.(\varphi\wedge \psi)$ is a formula.
\item
If $\vec{x}.\varphi_i$ is a formula for each $i\in \IN$, then $\vec{x}.\bigvee_{i\in \IN} \varphi_i$ is a formula.
\item
If $\vec{x},y. \varphi$ is a formula then $\vec{x}.(\exists y)\varphi$ is a formula.
\end{itemize}

Note that infinitary disjunctions have only finitely many free variables. We often abbreviate $(\exists y_1)\cdots(\exists y_n)\varphi$ as $(\exists \vec{y})\varphi$.

A \emph{basic formula} is a formula of the form $\vec{x}.\bigwedge F$, where $F$ is a finite set of formulas of the form $x=y$ or $P(x_{i_1},\ldots,x_{i_n})$ (again we identify the empty meet with $\top$). Since $\calL$ is countable there are only countably many basic formulas.

A normal form lemma holds for predicate $\sigma$-coherent logic as well.

\begin{lemma}[\cite{J02} Lemma~D1.3.8]\label{lem:normalform_pred}
Every predicate $\sigma$-coherent formula is provably equivalent to a formula of the form $\vec{x}.\bigvee_{i\in\IN}(\exists \vec{y_i}) \rho_i$, where each $\rho_i$ is either a basic formula or $\bot$.
\qed
\end{lemma}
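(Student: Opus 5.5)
The plan is a structural induction on the formula $\vec{x}.\varphi$. For each formula I produce an explicit normal form $\vec{x}.\bigvee_{i\in\IN}(\exists\vec{y_i})\rho_i$, with each $\rho_i$ basic or $\bot$, together with derivations of both sequents $\varphi\vdash_{\vec{x}}\text{NF}(\varphi)$ and $\text{NF}(\varphi)\vdash_{\vec{x}}\varphi$ in the predicate $\sigma$-coherent calculus of \cite{J02} (D1.3.1). Throughout, I freely $\alpha$-rename the bound variables $\vec{y_i}$ so that they are fresh, meaning disjoint from $\vec{x}$ and from one another.

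\emph{Base cases.} Atomic formulas ($x_i=x_j$ and $P(\ldots)$) and $\top$ are already basic formulas. Take $\vec{y}$ empty, and pad the disjunction with $\bot$ in all indices $i\geq 1$. Similarly $\bot$ is $\bigvee_i\bot$.

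\emph{Disjunction.} If each $\varphi_n$ has normal form $\bigvee_i(\exists\vec{y_{n,i}})\rho_{n,i}$, then $\bigvee_n\varphi_n$ is equivalent to the doubly indexed disjunction. I re-index it along a bijection $\IN\times\IN\cong\IN$. Associativity and reindexing of countable joins follow directly from the two $\vee$ rules.

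\emph{Conjunction.} I use the distributivity rule (in both directions, as derived in Section~2 for the propositional case) to get
\[
\bigvee_i(\exists\vec{y_i})\rho_i\wedge\bigvee_j(\exists\vec{z_j})\sigma_j \;\dashv\vdash\; \bigvee_{i,j}\bigl((\exists\vec{y_i})\rho_i\wedge(\exists\vec{z_j})\sigma_j\bigr).
\]
I then apply the Frobenius rule to pull the quantifiers out, obtaining $(\exists\vec{y_i},\vec{z_j})(\rho_i\wedge\sigma_j)$; this is where freshness of the variables is needed. Here $\rho_i\wedge\sigma_j$ is again basic, and if either factor is $\bot$ the conjunction is provably $\bot$. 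Finally I re-index along $\IN\times\IN\cong\IN$.

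\emph{Existential quantification.} For $(\exists y)\bigvee_i(\exists\vec{y_i})\rho_i$, I use the fact that $\exists$ commutes with countable disjunctions. This follows from the adjunction rules for $\exists$ together with the $\vee$ rules. I then merge $(\exists y)(\exists\vec{y_i})$ into a single block $(\exists y,\vec{y_i})$.

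\emph{Main obstacle.} The only delicate step is the conjunction case: the Frobenius law must be applied correctly, and the contexts must be tracked so that every basic formula lives in the extended context $\vec{x},\vec{y_i}$. Each individual move, however, is a standard derivable equivalence in geometric logic. Because of this, I expect to cite \cite{J02} D1.3.8 for the routine derivations rather than writing them out in full.
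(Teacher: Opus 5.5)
Your proposal is correct and follows the same route as the paper, which gives no proof of its own but cites \cite{J02} Lemma~D1.3.8. That lemma's standard proof is exactly this structural induction: distributivity for conjunctions of disjunctions, the Frobenius rule (after renaming bound variables apart) to merge existential prefixes, and commuting $\exists$ with joins. Your adaptations of padding with $\bot$ and re-indexing along $\IN\times\IN\cong\IN$ are the right ones here, because the paper's syntax only allows $\IN$-indexed disjunctions.
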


\subsubsection{Inference rules}

The inferences rules for predicate $\sigma$-coherent logic are listed below (see D1.3.1 of \cite{J02} and Sections 2.2 and 3.1 of \cite{V07}). As in \cite{J02}, we do not distinguish between formulas that differ only in the names of bound variables (i.e., $\alpha$-equivalent formulas).

\begin{itemize}
\item
$\varphi \vdash_{\vec{x}} \varphi$ (identity),\hspace{0.2cm}
\AxiomC{$\varphi \vdash_{\vec{x}} \psi$}
\AxiomC{$\psi \vdash_{\vec{x}} \chi$}
\RightLabel{(cut)}
\BinaryInfC{$\varphi \vdash_{\vec{x}} \chi$}
\DisplayProof
\item
$\varphi\wedge \bigvee_{i\in I} \psi_i \vdash_{\vec{x}} \bigvee_{i\in \IN} (\varphi\wedge\psi_i)$ (distributivity)
\item
$\varphi\vdash_{\vec{x}} \top$,\hspace{0.2cm} $\varphi\wedge\psi \vdash_{\vec{x}}\varphi$,\hspace{0.2cm} $\varphi\wedge\psi \vdash_{\vec{x}}\psi$,\hspace{0.2cm}
\AxiomC{$\chi \vdash_{\vec{x}} \varphi$}
\AxiomC{$\chi \vdash_{\vec{x}} \psi$}
\RightLabel{($\wedge I$)}
\BinaryInfC{$\chi \vdash_{\vec{x}} \varphi\wedge \psi$}
\DisplayProof
\item
$\bot\vdash\psi$,\hspace{0.2cm} $\varphi_j\vdash_{\vec{x}} \bigvee_{i\in I} \varphi_i$ (if $j\in I$),\hspace{0.2cm}
\AxiomC{$\varphi_i \vdash_{\vec{x}} \psi$ (for each $i\in \IN$)}
\RightLabel{($\vee I$)}
\UnaryInfC{$\bigvee_{i\in \IN} \varphi_i \vdash_{\vec{x}} \psi$}
\DisplayProof
\end{itemize}

\begin{itemize}
\item
\AxiomC{$\varphi \vdash_{\vec{x}} \psi$}
\RightLabel{(substitution)}
\UnaryInfC{$\varphi[\vec{s}/\vec{x}] \vdash_{\vec{y}} \psi[\vec{s}/\vec{x}]$}
\DisplayProof

where $\vec{s}$ is a list of variables that is compatible with $\vec{x}$, and $\vec{y}$ is any context that includes all of the variables in $\vec{s}$.
\item
$\top\vdash_{\vec{x}} x=x$,\hspace{0.2cm} $(\vec{x}=\vec{y})\wedge\varphi \vdash_{\vec{z}} \varphi[\vec{y}/\vec{x}]$

where $\vec{z}$ includes all the variables in $\vec{x}$ and $\vec{y}$ and those free in $\varphi$.
\item
\AxiomC{$\varphi \vdash_{\vec{x},y} \psi$}
\UnaryInfC{$(\exists y)\varphi \vdash_{\vec{x}} \psi$}
\DisplayProof, \hspace{0.2cm}
\AxiomC{$(\exists y)\varphi \vdash_{\vec{x}} \psi$}
\UnaryInfC{$\varphi \vdash_{\vec{x},y} \psi$}
\DisplayProof
\item
$\varphi \wedge (\exists y)\psi \vdash_{\vec{x}} (\exists y) (\varphi\wedge\psi)$ (Frobenius rule)
\end{itemize}

A sequent $\varphi \vdash_{\vec{x}}\psi$ is \emph{valid} if it can be derived from the above inference rules. We write $\varphi \not\vdash_{\vec{x}}\psi$ if the sequent is \emph{not} valid. A countable set $\calT$ of sequents of the form $\varphi\vdash_{\vec{x}} \psi$ is called a \emph{countable predicate $\sigma$-coherent theory}, and the elements of $\calT$ are called \emph{axioms}  (there is an implicit universal quantification over the variables in $\vec{x}$ for each axiom $\varphi \vdash_{\vec{x}} \psi$). We say that $\varphi\vdash_{\vec{x}} \psi$ is a \emph{theorem of $\calT$}, or \emph{derivable from $\calT$}, if there is a derivation of $\varphi\vdash_{\vec{x}} \psi$ from the inference rules and the axioms in $\calT$. We write $\varphi\vdash_{\vec{x}}^{\calT} \psi$ to denote a theorem of $\calT$.

\subsection{Category of Models}\label{subsec_CatOfMods}

The quasi-Polish category of \emph{overt discrete quasi-Polish spaces} ($\ODS$) is defined as in \cite{dbr26}. Briefly, objects $A\in \ODS_\Obj$ are defined as partial equivalence relations (PER) $\equiv_A$ on $\IN$, and morphisms $f\colon A\to B$ in $\ODS_\Mor$ are functions from the equivalence classes of $\equiv_A$ to the equivalence classes of $\equiv_B$. The topology on $\ODS_\Obj$ is generated by subbasic opens of the form $\{ A \in\ODS_\Obj \mid a \equiv_A a'\}$ for $a,a'\in\IN$. The topology on $\ODS_\Mor$ is generated by subbasic opens of the form $\{ f\colon A\to B \mid f(a)=b \,\&\, a' \equiv_A a'' \,\&\, b' \equiv_B b'' \}$ for $a,a',a'', b, b', b'' \in\IN$.

The specialization order on $\ODS_\Obj$ is given as $A \leq B$ if and only if $a \equiv_A a'$ implies $a \equiv_B a'$. $A$ is a \emph{subobject} of $B$ if $A \leq B$ and for all $a,a'$ if $a \equiv_A a$ and $a \equiv_B a'$ then $a \equiv_A a'$. This is different than how subobjects are defined in category theory, but it is easy to see that there is a bijection between subobjects in our sense and category theoretical subobjects in $\ODS$. In particular, if $f\colon A \to B$ is a monomorphism, then $f$ can be factored uniquely as an isomorphism $i\colon A \to A'$ to a subobject $A' \leq B$, followed by the embedding $A' \to B$ (we will sometimes write this embedding as $A' \leq B$).

Let $\calL$ be a countable language, which we view as a space with the discrete topology. The space of $\calL$-structures is defined as the subspace \[(\calS_\calL)_\Obj \subseteq \ODS_\Obj\times (\ODS_\Obj)^\calL\]
of pairs $M=(\overline{M},\llbracket \cdot\rrbracket_M)$, where $\llbracket \cdot\rrbracket_M\colon\calL\to\ODS_\Obj$ assigns to each $n$-ary predicate symbol $P\in\calL$ a subobject $\llbracket P\rrbracket_M\leq \overline{M}^n$.  We write $\equiv_M$ for the partial equivalence relation (PER) corresponding to $M$, and abbreviate $a \equiv_M a$ as $a\in M$. The specialization order on $\ODS_\Obj$ is $\bpi 2$, and for each $n\in\IN$ there is a continuous function $(\cdot)^n \colon\ODS_\Obj\to\ODS_\Obj$ which maps an object to its $n$-fold product, hence $(\calS_\calL)_\Obj$ is a quasi-Polish space. 

Given $M,M'\in (\calS_\calL)_\Obj$, a \emph{homomorphism} $h\colon M\to M'$ is a morphism $\overline{h}\colon \overline{M}\to\overline{M}'$ in $\ODS$ such that $\llbracket P\rrbracket_M(a_1,\ldots,a_n)$ implies $\llbracket P\rrbracket_{M'}(\overline{h}(a_1),\ldots,\overline{h}(a_n))$ for each $n$-ary predicate symbol $P\in\calL$. Here we are writing $\llbracket P\rrbracket_M(a_1,\ldots,a_n)$ as an abbreviation for $\langle a_1,\ldots,a_n \rangle \in \llbracket P\rrbracket_M$,  which recall is an abbreviation for $\langle a_1,\ldots,a_n \rangle \equiv_{\llbracket P\rrbracket_M} \langle a_1,\ldots,a_n \rangle$.

Define $(\calS_\calL)_\Mor\subseteq \ODS_\Mor\times (\calS_\calL)_\Obj\times (\calS_\calL)_\Obj$ to be the subspace of tuples $(h,M,M')$ such that $h\colon M\to M'$ is a homomorphism. It is easy to see that $(\calS_\calL)_\Mor$ is a quasi-Polish space. 

Together, $(\calS_\calL)_\Obj$ and $(\calS_\calL)_\Mor$ determine a quasi-Polish category $\calS_\calL$, which we call the \emph{category of (countable) $\calL$-structures}.

We write $\IN^*$ for the set of finite sequences of natural numbers. A list $\vec{a}\in\IN^*$ is \emph{compatible} with a context $\vec{x}$ (or a formula $\vec{x}.\varphi$) if $len(\vec{a})=len(\vec{x})$. The satisfaction relation $M\vDash (\vec{x}.\varphi)(\vec{a})$ for a predicate $\sigma$-coherent formula $\vec{x}.\varphi$ and a compatible list $\vec{a}\in\IN^*$ is defined inductively as follows:
\begin{enumerate}
\item
$M\vDash (\vec{x}.\bot)(\vec{a})$ never holds.
\item
$M\vDash (\vec{x}.\top)(\vec{a})$ if and only if $a_k\in M$ for each $a_k\in\vec{a}$.
\item
$M\vDash (\vec{x}.x_i=x_j)(\vec{a})$ if and only if $a_i\equiv_M a_j$ and $M\vDash (\vec{x}.\top)(\vec{a})$.
\item
$M\vDash (\vec{x}.P(x_{i_1},\ldots,x_{i_n}))(\vec{a})$ if and only if $\llbracket P\rrbracket_M(a_{i_1},\ldots,a_{i_n})$ and $M\vDash (\vec{x}.\top)(\vec{a})$.
\item
$M\vDash (\vec{x}.\varphi\wedge \psi)(\vec{a})$ if and only if $M\vDash (\vec{x}.\varphi)(\vec{a})$ and $M\vDash (\vec{x}.\psi)(\vec{a})$.
\item
$M\vDash (\vec{x}.\bigvee_{i\in \IN}\varphi_i)(\vec{a})$ if and only if $M\vDash (\vec{x}.\varphi_i)(\vec{a})$ for some $i\in \IN$.
\item
$M\vDash (\vec{x}.(\exists y)\varphi)(\vec{a})$ if and only if $M\vDash (\vec{x},y.\varphi)(\vec{a},b)$ for some $b \in M$.
\end{enumerate}
In some cases we will omit the context and write $M\vDash \varphi(\vec{a})$ when it does not cause confusion. Note that a basis for the topology on $(\calS_\calL)_\Obj$ is given by sets of the form
\[\name{ \rho(\vec{a})} = \{ M \mid M\vDash\vec{x}.\rho(\vec{a})\},\]
where $\vec{x}.\rho$ is a basic formula and $\vec{a}\in\IN^*$ is compatible with $\vec{x}$. 

Given a formula in context $\vec{x}.\varphi$, define the functor $\llbracket \vec{x}.\varphi \rrbracket\colon\calS_\calL \to \ODS$ as  \[\llbracket\vec{x}.\varphi \rrbracket_\Obj(M) = \{ \langle a_1,\ldots,a_n\rangle \in M^n \mid M \vDash \vec{x}.\varphi(a_0,\ldots,a_n) \}\]
and let $\llbracket\vec{x}.\varphi\rrbracket_\Mor(h\colon M\to M')$ be the map sending $\langle a_0,\ldots,a_n\rangle \in \llbracket\vec{x}.\varphi\rrbracket_\Obj(M)$ to $\langle h(a_0),\ldots, h(a_n)\rangle\in \llbracket\vec{x}.\varphi\rrbracket_\Obj(M')$. Note that $\llbracket\vec{x}.\varphi \rrbracket$ is a continuous functor.

Let $\calT$ be a countable predicate $\sigma$-coherent theory. $M\in (\calS_\calL)_\Obj$ is a \emph{$\calT$-model} if $\llbracket \vec{x}.\varphi \rrbracket_\Obj(M) \subseteq \llbracket \vec{x}.\psi \rrbracket_\Obj(M)$ for each axiom $\varphi \vdash_{\vec{x}} \psi$ in $\calT$. We write $\calX_\calT$ for the full subcategory of $\calS_\calL$ of (countable) models of $\calT$. It is easy to see that $\calX_\calT$ is a quasi-Polish category.

We write $\llbracket \vec{x}.\varphi \rrbracket^{\calT}$ for the restriction of the functor $\llbracket \vec{x}.\varphi \rrbracket$ to $\calX_\calT$. Thus each formula corresponds to a continuous functor from $\calX_\calT$ to $\ODS$. A converse can be obtained by interpreting each continuous functor from $\calX_\calT$ to $\ODS$ as an \emph{imaginary sort} (see Theorem~8.1 of \cite{Chen19}).

\subsection{Completeness}

\begin{lemma}\label{lem:minimal_struct}
Let $\vec{x}.\rho$ be a basic formula, and let $\vec{a}\in\IN^*$ be compatible with $\vec{x}$. Set $\name{ \rho(\vec{a})} = \{ M \mid M\vDash\vec{x}.\rho(\vec{a})\}$, which is a basic open subset of $(\calS_\calL)_\Obj$. Then there is $M_0\in \name{ \rho(\vec{a})} $ that is minimal with respect to the specialization order on  $(\calS_{\calL})_\Obj$.

Furthermore, if $\rho \vdash_{\vec{x}} x_i = x_j$ is valid whenever $a_i = a_j$, then for any formula $\vec{y}.\theta$ and $\vec{b}\in\IN^*$ compatible with $\vec{y}$, we have $M_0\vDash \theta(\vec{b})$ if and only if there is a list of variables $\vec{s}$ which is compatible with $\vec{y}$ and such that the following both hold:
\begin{enumerate}
\item
for each $i<len(\vec{s})$ there is $j<len(\vec{x})$ such that $s_i = x_j$ and $b_i = a_j$.
\item
$\rho \vdash_{\vec{x}} \theta[\vec{s}/\vec{y}]$ is valid.
\end{enumerate}
\end{lemma}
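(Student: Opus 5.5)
We build $M_0$ as the "term model" of $\rho$ instantiated at $\vec{a}$. Let $A=\{a_1,\ldots,a_n\}\subseteq\IN$ be the set of entries of $\vec{a}$. On $A$ define $\equiv_{M_0}$ as the least equivalence relation containing $a_i\equiv a_j$ whenever $x_i=x_j$ occurs in $\rho$ or $a_i=a_j$. Elements outside $A$ are not in $M_0$. For each $P\in\calL$, let $\llbracket P\rrbracket_{M_0}$ be the subobject of $\overline{M_0}^n$ generated by the tuples $\langle a_{i_1},\ldots,a_{i_k}\rangle$ with $P(x_{i_1},\ldots,x_{i_k})\in\rho$. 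These tuples are closed under $\equiv_{M_0}$ in each coordinate, and the pairing codes are taken as $\equiv$-related accordingly. Then $M_0\vDash\rho(\vec{a})$ holds directly.

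For minimality, suppose $M\le M_0$ and $M\in\name{\rho(\vec a)}$. Then $M\vDash\top(\vec a)$, so every $a_i\in M$. Each generating equality and predicate fact of $M_0$ is forced in $M$ by $M\vDash\rho(\vec a)$, together with reflexivity for the case $a_i=a_j$. Since the specialization order is inclusion of PERs, componentwise, we get $M_0\le M$. Hence $M_0$ is below every element of $\name{\rho(\vec a)}$ that lies below it, so it is minimal. In fact $M_0$ is least in the basic open.

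For the characterization, assume $\rho\vdash_{\vec x}x_i=x_j$ whenever $a_i=a_j$.

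\emph{($\Leftarrow$).} Given $\vec s$ with conditions 1 and 2, soundness applied to condition 2 gives $M_0\vDash\theta[\vec s/\vec y](\vec a)$. The satisfaction clauses commute with substitution of variables, which a routine induction on $\theta$ confirms. So the value assigned to $y_i$ is $a_j$ where $s_i=x_j$, and this equals $b_i$ by condition 1. Hence $M_0\vDash\theta(\vec b)$.

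\emph{($\Rightarrow$).} This is the real content. I would argue by induction on $\theta$, first reducing by Lemma~\ref{lem:normalform_pred} and soundness to $\theta=\bigvee_k(\exists\vec z_k)\sigma_k$ with each $\sigma_k$ basic. If $M_0\vDash\theta(\vec b)$, then $M_0\vDash\sigma_k(\vec b,\vec c)$ for some $k$ and some $\vec c$ in $M_0$. Every element of $M_0$ is some $a_j$, so we may choose variables $\vec s,\vec t$ from $\vec x$ naming $\vec b,\vec c$. It then suffices to show $\rho\vdash_{\vec x}\sigma_k[\vec s/\vec y,\vec t/\vec z_k]$, since $\sigma_k[\ldots]\vdash(\exists\vec z_k)\sigma_k[\ldots]\vdash\theta[\vec s/\vec y]$ validly.

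This reduces everything to atomic conclusions. For $x_p=x_q$ true in $M_0$, we have $a_p\equiv_{M_0}a_q$, so there is a chain of generating links. Each link is either an equation in $\rho$ or a coincidence $a_i=a_j$, and the latter is provable by hypothesis. Chaining with transitivity, derived from the substitution and equality rules, gives $\rho\vdash x_p=x_q$. For $P(x_{p_1},\ldots)$ true in $M_0$, the tuple is $\equiv_{M_0}$-related coordinatewise to a generating tuple of some $P(x_{i_1},\ldots)\in\rho$. The equalities just derived, together with the rule $(\vec x=\vec y)\wedge\varphi\vdash\varphi[\vec y/\vec x]$, yield $\rho\vdash P(x_{p_1},\ldots)$. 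Combining the atomic cases with $(\wedge I)$ handles $\sigma_k$.

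The main obstacle is the careful description of $\llbracket P\rrbracket_{M_0}$ as a genuine subobject under the pairing encoding of $\overline{M_0}^n$. The bookkeeping that the choice of names $\vec s$ is well defined up to provable equality is what the hypothesis on $a_i=a_j$ is designed for.
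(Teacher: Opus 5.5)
Your proposal is correct and follows the paper's proof. You build $M_0$ on the entries of $\vec a$, get minimality (indeed leastness) from $M\vDash\rho(\vec a)$ together with soundness, and for the characterization you pass to normal form, name the witnesses $\vec b,\vec c$ by variables of $\vec x$, and finish with the $\exists$-rule and substitution. The one difference is how $M_0$ is defined: you take the equivalence and saturation closure of the atoms of $\rho$, whereas the paper defines it by provability ($a_i\equiv_{M_0}a_j$ iff $\rho\vdash_{\vec x}x_i=x_j$). This costs you the chain argument for $M_0\vDash\alpha(\vec a)\Rightarrow\rho\vdash_{\vec x}\alpha$, which the paper gets by definition. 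In exchange, your $M_0$ is a well-defined PER in the first part even when some $a_i=a_j$ without the hypothesis, a case where the paper's definition is ambiguous. You also supply the easy ($\Leftarrow$) direction, which the paper omits.
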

\begin{proof}
$M_0$ is defined as follows. The PER for $M_0$ is only defined on the elements in $\vec{a}$, and is defined as $a_i\equiv_{M_0} a_j$ if and only if $\rho\vdash_{\vec{x}} x_i = x_j$ is valid. Similarly, for each predicate symbol $P\in\calL$, define $\llbracket P \rrbracket_{M_0}(a_{i_0},\ldots,a_{i_n})$ to hold if and only if $\rho\vdash_{\vec{x}} P(x_{i_0},\ldots,x_{i_n})$ is valid. 

It easily follows that if $\vec{x}.\vartheta$ is a basic formula, then $M_0 \vDash \vartheta(\vec{a})$ if and only if $\rho\vdash_{\vec{x}} \vartheta$ is valid. In particular, $M_0\vDash\rho(\vec{a})$ hence $M_0\in \name{ \rho(\vec{a})}$.

Fix any $M\in \name{ \rho(\vec{a})} $. Then $M\vDash \rho(\vec{a})$, hence if $\rho\vdash_{\vec{x}} x_i = x_j$ is valid then the soundness of the inference rules implies $a_i \equiv_M a_j$, and if $\rho\vdash_{\vec{x}} P(x_{i_0},\ldots,x_{i_n})$ is valid then $\llbracket P \rrbracket_M(a_{i_0},\ldots,a_{i_n})$ holds. Therefore, $M_0\leq M$. 

Assume $M_0\vDash \theta(\vec{b})$. Without loss of generality, $\theta = \bigvee_{k\in \IN}(\exists\vec{z_k})\vartheta_k$, where each $\vartheta_k$ is a basic formula. Let $k\in\IN$ be such that $M_0\vDash (\exists\vec{z_k})\vartheta_k(\vec{b})$. Let $\vec{c}$ be compatible with $\vec{z_k}$ such that $M_0\vDash \vartheta_k(\vec{b},\vec{c})$. From the definition of $M_0$, each $b_i$ must be equal to some $a_j$, so there is a list $\vec{s}$ such that for each $i$ there is $j$ with $s_i = x_j$ and $b_i = a_j$. Fix a similar list $\vec{t}$ for $\vec{c}$. Then $M_0\vDash \vartheta_k[(\vec{s},\vec{t})/(\vec{y},\vec{z_k})](\vec{a})$, which implies $\rho \vdash_{\vec{x}}\vartheta_k[(\vec{s},\vec{t})/(\vec{y},\vec{z_k})]$ is valid. We also have the following:
\begin{center}
\AxiomC{$(\exists \vec{z_k})\vartheta_k \vdash_{\vec{y}} \theta$}
\UnaryInfC{$\vartheta_k \vdash_{\vec{y},\vec{z_k}} \theta$}
\RightLabel{(substitution)}
\UnaryInfC{$\vartheta_k[(\vec{s},\vec{t})/(\vec{y},\vec{z_k})] \vdash_{\vec{x}} \theta[\vec{s}/\vec{y}]$}
\DisplayProof
\end{center}
The variables $\vec{z_k}$ are bounded in $\theta$, so the substituion $\vec{t}/\vec{z_k}$ has no effect on $\theta$ and so it has been abbreviated in the last line above. By applying cut we obtain $\rho \vdash_{\vec{x}} \theta[\vec{s}/\vec{y}]$. 
\end{proof}

In practice, the assumption on $\rho$ in the second part of the above lemma is not very strict, because in most applications we can either replace $\rho$ with the basic formula $\rho \wedge \bigwedge\{x_i = x_j \mid a_i=a_j\}$, or we can choose $\vec{a}\in\IN^*$ that does not contain duplicates and make the requirement vacuous. Later we will use a combination of these two approaches.

\begin{lemma}\label{lem:axiomless_proofs_pred}
The sequent $\varphi \vdash_{\vec{x}} \psi$ is valid if and only if $\llbracket\vec{x}.\varphi \rrbracket_\Obj(M)\subseteq \llbracket \vec{x}.\psi \rrbracket_\Obj(M)$ for each $M\in\calS_\calL$.
\end{lemma}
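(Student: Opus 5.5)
Soundness will be proved by induction on derivations, and completeness by building a minimal countermodel with Lemma~\ref{lem:minimal_struct}, exactly as in Lemma~\ref{lem:axiomless_proofs}.

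For the ``only if'' direction I would prove, by induction on the derivation of $\varphi\vdash_{\vec{x}}\psi$, that $\llbracket\vec{x}.\varphi\rrbracket_\Obj(M)\subseteq\llbracket\vec{x}.\psi\rrbracket_\Obj(M)$ for every $M\in(\calS_\calL)_\Obj$. Each inference rule is checked against the inductive clauses of the satisfaction relation. The only points needing care are substitution, equality and the existential rules.
\begin{itemize}
\item For substitution one first proves that $M\vDash(\vec{y}.\varphi[\vec{s}/\vec{x}])(\vec{b})$ iff $M\vDash(\vec{x}.\varphi)(\vec{a})$ and $\vec{b}\in M$, where $\vec{a}$ is read off from $\vec{b}$ along $\vec{s}$. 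This goes by induction on $\varphi$.
\item For the equality axiom, one uses that $\llbracket P\rrbracket_M$ is a subobject of $\overline{M}^n$, so it is closed under $\equiv_M$.
\item The two directions of the $\exists$-rule and Frobenius follow directly from clause 7, since witnesses $b$ are required to satisfy $b\in M$.
\end{itemize}

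For the ``if'' direction, assume $\varphi\not\vdash_{\vec{x}}\psi$. By Lemma~\ref{lem:normalform_pred}, write $\varphi=\bigvee_i(\exists\vec{y_i})\rho_i$ with each $\rho_i$ basic or $\bot$. By $(\vee I)$, some disjunct satisfies $(\exists\vec{y_i})\rho_i\not\vdash_{\vec{x}}\psi$. By the $\exists$-rule this gives $\rho_i\not\vdash_{\vec{x},\vec{y_i}}\psi$, and $\rho_i\neq\bot$. Write $\rho=\rho_i$ in context $\vec{x},\vec{y}$ with $\vec{y}=\vec{y_i}$. Choose $\vec{a}\in\IN^*$ of length $len(\vec{x},\vec{y})$ with no duplicates, so the hypothesis in the second part of Lemma~\ref{lem:minimal_struct} holds vacuously. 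That lemma then gives a minimal $M_0\vDash\rho(\vec{a})$. Writing $\vec{a}=(\vec{a}',\vec{a}'')$ according to the split into $\vec{x},\vec{y}$, we get $M_0\vDash\varphi(\vec{a}')$, using $\vec{a}''$ as witnesses.

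It remains to show $M_0\not\vDash\psi(\vec{a}')$. Suppose it did hold. Apply the second part of Lemma~\ref{lem:minimal_struct} with $\theta=\psi$ in context $\vec{x}$ and $\vec{b}=\vec{a}'$. This yields a list $\vec{s}$ with each $s_i$ a variable of $\vec{x},\vec{y}$ carrying the same element as $b_i$, and with $\rho\vdash_{\vec{x},\vec{y}}\psi[\vec{s}/\vec{x}]$ valid. Since the entries of $\vec{a}$ are distinct and $b_i=a'_i$, the only matching choice is $s_i=x_i$. Hence $\psi[\vec{s}/\vec{x}]=\psi$ and $\rho\vdash_{\vec{x},\vec{y}}\psi$ is valid, a contradiction. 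So $\vec{a}'$ lies in $\llbracket\vec{x}.\varphi\rrbracket_\Obj(M_0)$ but not in $\llbracket\vec{x}.\psi\rrbracket_\Obj(M_0)$.

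The main obstacle is bookkeeping for contexts. Choosing duplicate-free $\vec{a}$ is what forces $\vec{s}$ to be the identity, so it must be done. One must also check that validity of $\rho\vdash_{\vec{x},\vec{y}}\psi$, where $\psi$ is weakened to the larger context, really contradicts $(\exists\vec{y})\rho\not\vdash_{\vec{x}}\psi$. This is exactly the upward direction of the $\exists$-rule, applied once per variable of $\vec{y}$.
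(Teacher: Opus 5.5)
Your proposal is correct and follows the paper's proof essentially step for step. Soundness is by induction on derivations. For the converse, you make a normal-form reduction to a disjunct with $\rho_i\not\vdash_{\vec{x},\vec{y_i}}\psi$, then take the minimal structure of Lemma~\ref{lem:minimal_struct} on a duplicate-free tuple (the paper takes $0,\ldots,len(\vec{x})+len(\vec{y_i})-1$), so that the substitution $\vec{s}$ is forced to be trivial.

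Your extra details only make explicit what the paper leaves implicit: the substitution/weakening lemma, closure of $\llbracket P\rrbracket_M$ under $\equiv_M$, and discarding the $\bot$ disjuncts. One small naming slip: the $\exists$-rule you need at the end is the one deriving $(\exists y)\varphi\vdash_{\vec{x}}\psi$ from $\varphi\vdash_{\vec{x},y}\psi$.
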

\begin{proof}
If $\varphi \vdash_{\vec{x}} \psi$ and $M \vDash \vec{x}.\varphi(\vec{a})$, then $M \vDash \vec{x}.\psi(\vec{a})$ can be shown by induction on the derivation. Therefore, $\llbracket \vec{x}.\varphi \rrbracket_\Obj(M)\subseteq \llbracket\vec{x}.\psi \rrbracket_\Obj(M)$.

Conversely, assume $\varphi \not\vdash_{\vec{x}} \psi$. We can assume $\varphi = \bigvee_{i\in \IN}(\exists\vec{y_i})\rho_i$, where each $\rho_i$ is a basic formula. For some $i\in\IN$ we must have $\rho_i \not\vdash_{\vec{x},\vec{y_i}}  \psi$, since otherwise the $\exists$-rule and $(\vee I)$ could be applied to obtain $\varphi \vdash_{\vec{x}} \psi$. Let $M_0$ be the minimal structure in $\name{\rho_i(\vec{a},\vec{b})}$, where $\vec{a}=\{0,\ldots,len(\vec{x})-1\}$ and $\vec{b}=\{len(\vec{x}),\ldots,len(\vec{x})+len(\vec{y_i})-1\}$. Then $M_0 \not\vDash \psi(\vec{a})$, because otherwise $\rho_i\vdash_{\vec{x},\vec{y_i}} \psi$ would be valid by the minimality of $M_0$ (we can ignore the substitution by $\vec{s}$ because our choice of $\vec{a},\vec{b}$ prevents it from changing any variables). Therefore, $\vec{a} \in \llbracket\vec{x}.\varphi \rrbracket_\Obj(M_0)$ and $\vec{a}\not\in \llbracket\vec{x}.\psi \rrbracket_\Obj(M_0)$, hence $\llbracket \vec{x}.\varphi \rrbracket_\Obj(M_0)\not\subseteq \llbracket \vec{x}.\psi \rrbracket_\Obj(M_0)$.
\end{proof}


\begin{lemma}\label{lem:singleaxiom_proofs_pred2}
Let $\vec{x},\vec{z}.\rho$ be a basic formula, and let $\vec{x}.\psi$, $\vec{y}.\sigma$, and $\vec{y}.\theta$ be arbitrary formulas. Assume $\vec{a},\vec{b},\vec{c}\in\IN^*$ are compatible with $\vec{x}$, $\vec{y}$, and $\vec{z}$, respectively. Further assume that $\rho \vdash_{\vec{x},\vec{z}} x_i = x_j$ whenever $a_i = a_j$, that $\vec{a}$ and $\vec{c}$ are disjoint, and that $\vec{c}$ does not contain any duplicates.

If $\name{\rho(\vec{a},\vec{c})} \subseteq \name{\sigma(\vec{b})} \cup \name{\psi(\vec{a})}$ and $\name{\rho(\vec{a},\vec{c})} \cap \name{\theta(\vec{b})} \subseteq \name{\psi(\vec{a})}$ then $\rho \vdash_{\vec{x},\vec{z}} \psi$ is derivable from the single axiom $\sigma \vdash_{\vec{y}} \theta$.
\end{lemma}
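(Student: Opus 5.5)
The plan is to reduce everything to the minimal structure of Lemma~\ref{lem:minimal_struct}. Let $M_0$ be the minimal element of $\name{\rho(\vec{a},\vec{c})}$. The assumption that $\rho\vdash_{\vec{x},\vec{z}} x_i=x_j$ whenever $a_i=a_j$, together with the facts that $\vec{a},\vec{c}$ are disjoint and that $\vec{c}$ has no duplicates, ensures that the second part of Lemma~\ref{lem:minimal_struct} applies to the combined list $(\vec{a},\vec{c})$. By the first hypothesis, $M_0\in\name{\sigma(\vec{b})}\cup\name{\psi(\vec{a})}$, so I split into two cases.

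\emph{Case $M_0\vDash\psi(\vec{a})$.} Lemma~\ref{lem:minimal_struct} gives a list $\vec{s}$ of variables from $\vec{x},\vec{z}$ with $\rho\vdash_{\vec{x},\vec{z}}\psi[\vec{s}/\vec{x}]$ valid. Each $s_i$ is a variable whose assigned element equals $a_i$; since $\vec{a},\vec{c}$ are disjoint, it is some $x_j$ with $a_j=a_i$. By hypothesis $\rho\vdash x_i=x_j$ for all such pairs. Hence $\rho\vdash(\vec{s}=\vec{x})\wedge\psi[\vec{s}/\vec{x}]$, and the equality rule gives $\rho\vdash_{\vec{x},\vec{z}}\psi$ outright, without the axiom.

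\emph{Case $M_0\vDash\sigma(\vec{b})$.} Every entry of $\vec{b}$ then lies in $M_0$, so it occurs in $(\vec{a},\vec{c})$. Lemma~\ref{lem:minimal_struct} yields $\vec{s}$ (variables from $\vec{x},\vec{z}$ matching $\vec{b}$) with $\rho\vdash_{\vec{x},\vec{z}}\sigma[\vec{s}/\vec{y}]$. Substituting into the axiom gives $\sigma[\vec{s}/\vec{y}]\vdash_{\vec{x},\vec{z}}\theta[\vec{s}/\vec{y}]$. By cut and $(\wedge I)$ we get $\rho\vdash\rho\wedge\theta[\vec{s}/\vec{y}]$ from the axiom. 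It then suffices to show that $\rho\wedge\theta[\vec{s}/\vec{y}]\vdash_{\vec{x},\vec{z}}\psi$ is valid, after which one more cut finishes the proof. By Lemma~\ref{lem:axiomless_proofs_pred} this is a semantic statement.

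Suppose it fails for some $M\in\calS_\calL$ and tuple $(\vec{a}',\vec{c}')$, so that $M\vDash(\rho\wedge\theta[\vec{s}/\vec{y}])(\vec{a}',\vec{c}')$ but $M\not\vDash\psi(\vec{a}')$. I will transport $M$ to an isomorphic copy $N$ in which these witnesses are named by the specific numbers $\vec{a},\vec{c}$. Choose a function $f\colon\IN\to\IN$ with $f(a_i)=a'_i$ and $f(c_k)=c'_k$. This is well defined: $\vec{c}$ has no duplicates and is disjoint from $\vec{a}$, and if $a_i=a_j$ then $M\vDash\rho$ forces $a'_i\equiv_M a'_j$, so after adjusting representatives within $\equiv_M$-classes the assignment is consistent. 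On the remaining numbers, let $f$ hit representatives of all elements of $M$.

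Pull back along $f$: set $n\equiv_N m$ iff $f(n)\equiv_M f(m)$, and $\llbracket P\rrbracket_N(\vec{n})$ iff $\llbracket P\rrbracket_M(f(\vec{n}))$. Then $f$ induces an isomorphism $N\cong M$ in $\calS_\calL$, and satisfaction is invariant under such isomorphisms (a routine induction). Hence $N\vDash\rho(\vec{a},\vec{c})$ and $N\not\vDash\psi(\vec{a})$. Moreover, $N\vDash\theta[\vec{s}/\vec{y}](\vec{a},\vec{c})$, which is exactly $N\vDash\theta(\vec{b})$ by the choice of $\vec{s}$. This contradicts the second hypothesis $\name{\rho(\vec{a},\vec{c})}\cap\name{\theta(\vec{b})}\subseteq\name{\psi(\vec{a})}$.

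I expect this relabeling step to be the main obstacle. The hypotheses are stated about fixed numerals $\vec{a},\vec{b},\vec{c}$, while Lemma~\ref{lem:axiomless_proofs_pred} quantifies over arbitrary structures and tuples. The bookkeeping about duplicates and disjointness is precisely what makes the pullback $N$ well defined, and I would write that part out carefully.
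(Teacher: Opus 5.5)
Your proof is correct. It agrees with the paper except at the key step, which is showing that $\rho\wedge\theta[\vec{s}/\vec{y}]\vdash_{\vec{x},\vec{z}}\psi$ is valid.

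\emph{The paper's route.} It builds the needed structures with the right labels from the start. It puts $\theta[\vec{s}/\vec{y}]$ in normal form $\bigvee_k(\exists\vec{w_k})\vartheta_k$. For each $k$ it takes the minimal structure satisfying $(\rho\wedge\vartheta_k)(\vec{a},\vec{c},\vec{e})$, with $\vec{e}$ fresh and duplicate-free, and applies the second hypothesis to that structure. It then reads off $\rho\wedge\vartheta_k\vdash\psi$ from Lemma~\ref{lem:minimal_struct} and reassembles the disjuncts with the Frobenius rule and distributivity.

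\emph{Your route.} You use Lemma~\ref{lem:axiomless_proofs_pred} as a black box. You then relabel an arbitrary counterexample so that its parameters are named exactly $(\vec{a},\vec{c})$.

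\emph{What each buys.} Your route avoids the normal form and the Frobenius bookkeeping. It also makes clear that the hypotheses on $\vec{a},\vec{c}$ are exactly what makes the relabeling $f$ well defined. The price is an invariance fact the paper never needs: satisfaction is preserved by the induced isomorphism $N\cong M$, and by replacing parameters with $\equiv_M$-equivalent ones. Its $\exists$ case relies on $f$ hitting every $\equiv_M$-class, which you arranged. The paper avoids this because its minimal structures contain only the named parameters.

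Your Case~1 correctly spells out what the paper compresses into one line. Disjointness forces each $s_i$ to be some $x_j$ with $a_j=a_i$. The equality rule then applies, instantiated via substitution since $\vec{s}$ may repeat variables.
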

\begin{proof}
Assume the conditions of the lemma hold. By Lemma~\ref{lem:minimal_struct} there is a minimal structure $M_0\in \name{\rho(\vec{a},\vec{c})}$. If $M_0 \vDash \psi(\vec{a})$, then minimality and the assumptions on $\rho$, $\vec{a}$, and $\vec{c}$ would imply that $\rho \vdash_{\vec{x},\vec{z}} \psi$ is valid, and we are done.

So we can assume $M_0 \vDash \sigma(\vec{b})$. By minimality, there is a list of variables $\vec{s}$ from $\vec{x},\vec{z}$ that is compatible with $\vec{y}$ such that 
\begin{enumerate}
\item
for each $i<len(\vec{s})$, either there is $j<len(\vec{x})$ such that $s_i = x_j$ and $b_i = a_j$, or there is $j<len(\vec{z})$ such that $s_i = z_j$ and $b_i = c_j$.
\item
$\rho\vdash_{\vec{x},\vec{z}} \sigma[\vec{s}/\vec{y}]$ is valid.
\end{enumerate}
For notational convenience, we set $\hat{\sigma} = \sigma[\vec{s}/\vec{y}]$ and $\hat{\theta} = \theta[\vec{s}/\vec{y}]$.

We next show that $\rho \wedge \hat{\theta} \vdash_{\vec{x},\vec{z}} \psi$ is valid. This is trivial if $\hat{\theta} \vdash_{\vec{x},\vec{z}} \bot$, so we can assume $\hat{\theta} = \bigvee_{k\in \IN}(\exists\vec{w_k})\vartheta_k$, where each $\vartheta_k$ is a basic formula.

Fix $k$ and let $M'_0$ be minimal such that $M'_0\vDash (\rho\wedge\vartheta_k)(\vec{a},\vec{c},\vec{e})$, where $\vec{e}\in\IN^*$ is compatible with $\vec{w_k}$, is disjoint from $\vec{a},\vec{c}$, and does not contain duplicates. Then $M'_0 \vDash \rho(\vec{a},\vec{c})$ and $M'_0\vDash \hat{\theta}(\vec{a},\vec{c})$, and since $\name{\hat{\theta}(\vec{a},\vec{c})} \subseteq \name{\theta(\vec{b})}$ our assumption implies $M'_0\vDash \psi(\vec{a})$. The minimality of $M'_0$ and the disjointness of the contexts implies $\rho\wedge\vartheta_k \vdash_{\vec{x},\vec{z},\vec{w_k}}\psi$ is valid, from which we can derive $\rho\wedge(\exists w_k)\vartheta_k \vdash_{\vec{x},\vec{z}}\psi$ by using the Frobenius rule.

Since $k$ was arbitrary, it follows that $\bigvee_{k\in\IN} (\rho\wedge(\exists w_k)\vartheta_k) \vdash_{\vec{x},\vec{z}}\psi$ is valid. Using distributivity and the Frobenius rule again, we obtain a derivation of $\rho \wedge \hat{\theta} \vdash_{\vec{x},\vec{z}} \psi$.

Thus $\rho \vdash_{\vec{x},\vec{z}} \hat{\sigma}$ and $\rho \wedge \hat{\theta} \vdash_{\vec{x},\vec{z}} \psi$ are both valid, so we have the following derivation:
\begin{center}
\AxiomC{$\rho \vdash_{\vec{x},\vec{z}} \rho$}
\AxiomC{$\vdots$}
\UnaryInfC{$\rho \vdash_{\vec{x},\vec{z}} \hat{\sigma}$}
\AxiomC{$\sigma\vdash_{\vec{y}} \theta$}
\UnaryInfC{$\hat{\sigma}  \vdash_{\vec{x},\vec{z}} \hat{\theta} $}
\BinaryInfC{$\rho \vdash_{\vec{x},\vec{z}}\hat{\theta}$}
\BinaryInfC{$\rho \vdash_{\vec{x},\vec{z}} \rho \wedge \hat{\theta}$}
\AxiomC{$\vdots$}
\UnaryInfC{$\rho \wedge \hat{\theta} \vdash_{\vec{x},\vec{z}} \psi$}
\BinaryInfC{$\rho\vdash_{\vec{x},\vec{z}}\psi$}
\DisplayProof
\end{center}
Therefore, $\rho\vdash_{\vec{x},\vec{z}}\psi$ is derivable from the single axiom $\sigma \vdash_{\vec{y}} \theta$. 
\end{proof}


Given $\vec{x}.\varphi$ and $\vec{x}.\psi$, define $\llbracket \vec{x}.\varphi \rrbracket^\calT \leq \llbracket\vec{x}.\psi \rrbracket^\calT$ if and only if $\llbracket \vec{x}.\varphi \rrbracket^\calT_\Obj(M)\subseteq \llbracket \vec{x}.\psi \rrbracket^\calT_\Obj(M)$ for each $M\in \calX_\calT$. The next theorem shows that the functor category $\ODS^{\calX_\calT}$ can be viewed as (a generalization of) the Lindenbaum algebra of $\calT$ (and the analogy becomes precise if we allow the imaginary sorts).

\begin{theorem}
Let $\calT$ be a countable predicate $\sigma$-coherent theory. Then $\varphi \vdash_{\vec{x}}^\calT \psi$ if and only if $\llbracket \vec{x}.\varphi \rrbracket^\calT \leq \llbracket\vec{x}.\psi \rrbracket^\calT$.
\end{theorem}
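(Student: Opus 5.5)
Soundness is proved by induction on the derivation: each inference rule preserves the property that $\llbracket \vec{x}.\varphi\rrbracket_\Obj(M)\subseteq\llbracket\vec{x}.\psi\rrbracket_\Obj(M)$ holds in every $\calT$-model $M$. The axioms of $\calT$ satisfy this property by the definition of $\calX_\calT$, and the remaining rules are handled exactly as in Lemma~\ref{lem:axiomless_proofs_pred}.

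For completeness, I would generalize the proof of Theorem~\ref{thrm:prop_complete}, replacing points of $\sierp^V$ by structures carrying an assignment. Assume $\varphi\not\vdash^\calT_{\vec{x}}\psi$, and put $\varphi$ in normal form using Lemma~\ref{lem:normalform_pred}. Then some disjunct $(\exists\vec{y})\rho$ satisfies $\rho\not\vdash^\calT_{\vec{x},\vec{y}}\psi$, so it suffices to treat the case where $\varphi=\rho$ is basic. Mirroring the formula $\chi$ in the propositional proof, I would fix $\vec{a}=(0,\ldots,len(\vec{x})-1)$ and consider the closed set
\[C=\bigcap\{\neg\name{\vartheta(\vec{a},\vec{c})} \mid \vec{x},\vec{z}.\vartheta \text{ basic},\ \vec{c}\text{ fresh and duplicate-free},\ \vartheta\vdash^\calT_{\vec{x},\vec{z}}\psi\}.\]
The structures in $C$ are those in which $\vec{a}$ fails every basic configuration that $\calT$-provably forces $\psi$. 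Since the minimal structure for $\rho(\vec{a})$ with fresh witnesses lies in $C$ (by Lemma~\ref{lem:minimal_struct}, as $\rho\not\vdash^\calT\psi$), the set $\name{\rho(\vec{a},\vec{b})}\cap C$ is nonempty. The goal is then to find a $\calT$-model $M$ in this set with $M\not\vDash\psi(\vec{a})$.

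The model condition is a countable intersection of sets of the form $\neg\name{\sigma(\vec{b})}\cup\name{\theta(\vec{b})}$, indexed by the axioms $\sigma\vdash_{\vec{y}}\theta$ and the tuples $\vec{b}\in\IN^*$. I would show that each of these sets is dense in $C$ (intersected with the $\bpi 2$ subspace $(\calS_\calL)_\Obj$, which is quasi-Polish). Suppose a basic open set $\name{\vartheta(\vec{a},\vec{c})}$ meets $C$ but is disjoint from $C\cap(\neg\name{\sigma(\vec{b})}\cup\name{\theta(\vec{b})})$. We may strengthen $\vartheta$ with the equalities forced by repeated entries and choose $\vec{c}$ disjoint from $\vec{a}$ and duplicate-free, which is allowed after shrinking and renaming. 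The hypotheses of Lemma~\ref{lem:singleaxiom_proofs_pred2} then hold with $\psi$ replaced by the "$C$-complement" formula, giving $\vartheta\vdash^\calT\psi$. This contradicts the assumption that the open set meets $C$. By the generalized Baire category theorem, the set of $\calT$-models is dense in $C$, so there is a model in $\name{\rho(\vec{a},\vec{b})}\cap C$. Such a model does not satisfy $\psi(\vec{a})$, because by Lemma~\ref{lem:minimal_struct} and the normal form, $\name{\psi(\vec{a})}$ is covered by the sets $\name{\vartheta(\vec{a},\vec{c})}$ that define $C$.

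I expect the main obstacle to be the density step. Lemma~\ref{lem:singleaxiom_proofs_pred2} concludes about a single formula $\psi$ in context $\vec{x}$, whereas $C$ involves existential witnesses. A cleaner choice is to take $C=\neg\name{\chi(\vec{a})}$, where $\chi=\bigvee\{(\exists\vec{z})\vartheta\mid \vartheta\vdash^\calT\psi\}$. With this choice, $\psi\vdash\chi$ and $\chi\vdash^\calT\psi$ hold as in the propositional case, so Lemma~\ref{lem:singleaxiom_proofs_pred2} applies directly with $\chi$ in place of $\psi$. It then yields $\vartheta\vdash^\calT_{\vec{x},\vec{z}}\chi\vdash^\calT\psi$, so $(\exists\vec{z})\vartheta$ is one of the disjuncts of $\chi$, and this contradicts the assumption that the basic open set meets $\neg\name{\chi(\vec{a})}$. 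The remaining care is the bookkeeping of fresh, duplicate-free parameters, which is needed to meet the hypotheses of Lemma~\ref{lem:singleaxiom_proofs_pred2}.
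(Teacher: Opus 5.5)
Your route is the paper's own proof: the same $\chi$, density of each axiom instance in $\neg\name{\chi(\vec a)}$ via Lemma~\ref{lem:singleaxiom_proofs_pred2} with $\chi$ in place of $\psi$, and then the generalized Baire theorem. However, the density step fails as you formulate it, and the paper's written argument makes the same move. You test density in $C=\neg\name{\chi(\vec a)}$ only against basic opens of the form $\name{\vartheta(\vec a,\vec c)}$, and every point of such a set contains all entries of $\vec a$. But $C$ also contains every structure in which some $a_k$ does not exist (because $\name{\chi(\vec a)}\subseteq\name{\top(\vec a)}$), and such points lie in no set of that form.

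The density claim is in fact false. Take $\calL=\{P\}$ with $P$ unary, $\calT=\{P(y)\vdash_y\bot\}$, $\varphi=\top$ and $\psi=\bot$ in context $x$, and $\vec a=(0)$. The structure with universe $\{7\}$ and $P(7)$ lies in $\name{P(7)}\cap\neg\name{\chi(0)}$. Yet $\name{P(7)}$ is disjoint from $\neg\name{P(7)}\cup\name{\bot(7)}$. So this axiom instance, and a fortiori $(\calX_\calT)_\Obj$, is not dense in $\neg\name{\chi(0)}$. In this example, every $\name{\vartheta(0,\vec c)}\subseteq\name{P(7)}$ lies inside $\name{\chi(0)}$, because $(\exists z)P(z)$ is a disjunct of $\chi$.

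The repair is cheap: run the Baire argument in $Y=\name{\top(\vec a)}\cap\neg\name{\chi(\vec a)}$ (open intersected with closed, hence $\bpi 2$ and quasi-Polish), or in your $\name{\rho(\vec a,\vec b)}\cap C$. Every point of $Y$ contains $\vec a$, so the sets $\name{\vartheta(\vec a,\vec c)}\cap Y$ with $\vec c$ disjoint from $\vec a$ and duplicate-free do form a base for $Y$. Suppose such a set misses $Y\cap(\neg\name{\sigma(\vec b)}\cup\name{\theta(\vec b)})$. Because $\name{\vartheta(\vec a,\vec c)}\subseteq\name{\top(\vec a)}$, you then get $\name{\vartheta(\vec a,\vec c)}\subseteq\name{\sigma(\vec b)}\cup\name{\chi(\vec a)}$ and $\name{\vartheta(\vec a,\vec c)}\cap\name{\theta(\vec b)}\subseteq\name{\chi(\vec a)}$, and your contradiction goes through unchanged.

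The Baire theorem in $Y$ then makes the $\calT$-models dense in $Y$. Your minimal-structure argument shows that $\name{\rho(\vec a,\vec b)}\cap Y$ is a nonempty open subset of $Y$, which yields the required model. Incidentally, your first choice of $C$ already equals $\neg\name{\chi(\vec a)}$: a witness tuple that overlaps $\vec a$ or has repeats can be absorbed by substituting variables in $\vartheta$. So the obstacle you anticipated there is not real.
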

\begin{proof}
If $\varphi \vdash_{\vec{x}}^\calT \psi$ is valid, then a proof by induction on the derivation shows that $\llbracket \vec{x}.\varphi \rrbracket_\Obj(M)\subseteq \llbracket \vec{x}.\psi \rrbracket_\Obj(M)$ for each $M\in\calX_\calT$.


Conversely, assume that $\varphi \vdash^{\calT}_{\vec{x}}\psi$ is not derivable. Let $(\sigma_i \vdash_{\vec{y}_i}\theta_i)_{i\in\IN}$ be an enumeration of $\calT$. Set 
\[S = \{ (\exists \vec{z})\rho\mid \text{$\vec{x},\vec{z}.\rho$ is a basic formula such that $(\exists \vec{z})\rho \vdash^{\calT}_{\vec{x}} \psi$}\}\]
and define
\[\chi = \bigvee_{\vartheta\in S}  \vartheta.\]
As in the proof of Theorem~\ref{thrm:prop_complete}, we have $\psi\vdash_{\vec{x}}\chi$ is valid and $\chi \vdash^{\calT}_{\vec{x}} \psi$ but $\varphi \not\vdash_{\vec{x}} \chi$. It follows from Lemma~\ref{lem:axiomless_proofs_pred} that there is $\vec{a}$ such that $\name{\varphi(\vec{a})} \cap \neg\name{\chi(\vec{a})} \not=\emptyset$.

We show that $\neg\name{\sigma_i(\vec{b})} \cup \name{\theta_i(\vec{b})}$ is dense in  $\neg\name{\chi(\vec{a})}$ for each $i\in\IN$ and $\vec{b}\in\IN^*$ that is compatible with $y_i$. Let $U$ be any open set that intersects $\neg\name{\chi(\vec{a})}$. Let $\vec{x},\vec{z}.\rho$ be a basic formula and $\vec{c}$ (disjoint from $\vec{a}$) such that the basic open $\name{\rho(\vec{a},\vec{c})}$ is contained in $U$ and intersects $\neg\name{\chi(\vec{a})}$. We can assume that $\rho$ implies $x_i = x_j$ whenever $a_i = a_j$, and that $\vec{c}$ does not contain duplicates.

Assume for a contradiction that $\name{\rho(\vec{a},\vec{c})}$ does not intersect $\neg\name{\chi(\vec{a})}\cap(\neg\name{\sigma_i(\vec{b})} \cup \name{\theta_i(\vec{b})})$. Then $\name{\rho(\vec{a},\vec{c})} \subseteq \name{\sigma(\vec{b})} \cup \name{\chi(\vec{a})}$ and $\name{\rho(\vec{a},\vec{c})} \cap \name{\theta(\vec{b})} \subseteq \name{\chi(\vec{a})}$, hence Lemma~\ref{lem:singleaxiom_proofs_pred2} applies and we obtain $\rho\vdash^{\calT}_{\vec{x},\vec{z}}\chi$. But then $\rho\in S$, and we obtain a contradiction with the assumption that $\name{\rho(\vec{a},\vec{c})}$ intersects $\neg\name{\chi(\vec{a})}$.

It follows that $\neg\name{\sigma_i(\vec{b})} \cup \name{\theta_i(\vec{b})}$ is dense in  $\neg\name{\chi(\vec{a})}$ (for each $i\in\IN$ and compatible $\vec{b}$), hence by the (generalized) Baire category theorem, 
\[(\calX_\calT)_\Obj = \bigcap_{i\in\IN} \bigcap_{\vec{b}}  (\neg\name{\sigma_i(\vec{b})} \cup \name{\theta_i(\vec{b})})\]
is dense in $\neg\name{\chi(\vec{a})}$. Since $\name{\varphi(\vec{a})} \cap \neg\name{\chi(\vec{a})} \not=\emptyset$, there is $M \in (\calX_\calT)_\Obj \cap \name{\varphi(\vec{a})} \cap \neg\name{\chi(\vec{a})}$. Since $\neg\name{\chi(\vec{a})}\subseteq \neg\name{\psi(\vec{a})}$, we have found $M\in (\calX_\calT)_\Obj$ with $\vec{a} \in \llbracket\vec{x}.\varphi \rrbracket_\Obj(M)$ but $\vec{a}\not\in \llbracket\vec{x}.\psi \rrbracket_\Obj(M)$.
\end{proof}

\section{Applications}

\subsection{Completeness of classical first-order logic}

As an application, we show how to encode classical first-order logic into propositional $\sigma$-coherent logic and prove a completeness theorem. This is essentially Rasiowa and Sikorski's proof \cite{RS51} of G\"{o}del's completeness theorem (see also Section~7 of \cite{S08}).

Fix a language $\calL$ consisting of countably many predicate symbols, each with a given arity.  First-order logic has logical symbols $\bot$, $\top$, $=$, $\neg$, $\wedge$, $\vee$, $\Rightarrow$, $\exists$, $\forall$, and countably many variable symbols $(x_k)_{k\in\IN}$. The formulas of $\calL$ are defined as usual. We omit function symbols for simplicity, but they can be encoded as predicate symbols as usual.

Let $T$ be a countable first-order theory in the language $\calL$. Define a propositional variable $\token{\varphi}$ for each $\calL$ formula $\varphi$ (possibly containing free variables), and let $V$ be the set of all such propositional variables. Let $\calT$ be the countable propositional $\sigma$-coherent theory with the following axiom schemas:
\begin{enumerate}
\item
$\token{\bot}\vdash \bot$
\item
$\top\vdash \token{\top}$
\item
$\token{\varphi}\wedge\token{\psi}\vdash \token{\varphi\wedge\psi}$
\item
$\token{\varphi\vee\psi}\vdash \token{\varphi}\vee\token{\psi}$
\item
$\token{\exists x_i.\varphi}\vdash \bigvee_{k\in\IN} \token{\varphi[x_k/x_i]}$
\item
$\token{\varphi}\vdash \token{\psi}$ (whenever $\varphi \Rightarrow \psi$ is $T$-provable using classical first-order logic)
\end{enumerate}
In 5, we assume all bound occurrences of $x_k$ in $\varphi$ are replaced by a new variable $x_\ell$ not occurring in $\varphi$ before doing the substitution $\varphi[x_k/x_i]$. The value of $\token{\varphi[x_k/x_i]}$ is independent of the choice of $x_\ell$ because of 6. It also follows from 6 that $\calT$ includes all classical axioms such as $\top \vdash \token{\varphi}\vee\token{\neg\varphi}$, $\token{\neg \exists x. \neg \varphi} \vdash \token{\forall x.\varphi}$, $\token{\varphi\Rightarrow\psi} \vdash \token{\neg\varphi\vee\psi}$, etc.

\begin{lemma}
If $\varphi$ and $\psi$ are $\calL$-formulas, then $\token{\varphi}\vdash^\calT \token{\psi}$ if and only if $\varphi\Rightarrow\psi$ is $T$-provable using classical first-order logic. 
\qed
\end{lemma}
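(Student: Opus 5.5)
The forward direction follows from axiom schema 6. The converse is the real content: we must show that every theorem $\token{\varphi}\vdash^\calT\token{\psi}$ of the propositional theory already corresponds to a classically $T$-provable implication.

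I would prove the converse semantically, using Theorem~\ref{thrm:prop_complete}. Suppose $\varphi\Rightarrow\psi$ is not $T$-provable. I will construct a model $M\in X_\calT$ with $M\vDash\token{\varphi}$ and $M\not\vDash\token{\psi}$. Then $\llbracket\token{\varphi}\rrbracket^\calT\not\subseteq\llbracket\token{\psi}\rrbracket^\calT$, and soundness (the easy half of Theorem~\ref{thrm:prop_complete}) gives $\token{\varphi}\not\vdash^\calT\token{\psi}$.

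The model will come from an ultrafilter on the Lindenbaum algebra rather than from a first-order structure. Let $B$ be the Lindenbaum--Tarski Boolean algebra of $\calL$-formulas modulo $T$-provable equivalence. Since $\varphi\wedge\neg\psi$ is not $T$-refutable, its class $[\varphi\wedge\neg\psi]$ is nonzero in $B$. We need an ultrafilter $U\ni[\varphi\wedge\neg\psi]$ that respects the countably many existential joins
\[ [\exists x_i.\chi]=\bigvee_k[\chi[x_k/x_i]]. \]
These joins exist in $B$ by the usual argument: fresh variables can always be chosen, since formulas have finitely many free variables. Rasiowa--Sikorski then gives such a $U$. Concretely, I would enumerate the countably many pairs $(\exists x_i.\chi)$ and build a decreasing chain of nonzero elements $b_0=[\varphi\wedge\neg\psi]\geq b_1\geq\cdots$. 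At stage $n$, for the $n$-th existential formula, either $b_n\wedge[\exists x_i.\chi]=0$, in which case we set $b_{n+1}=b_n\wedge\neg[\exists x_i.\chi]$; or some $k$ has $b_n\wedge[\chi[x_k/x_i]]\neq 0$, which holds because meets distribute over existing joins in a Boolean algebra, and we set $b_{n+1}$ to that meet. Finally, extend the filter generated by the $b_n$ to an ultrafilter. Rasiowa--Sikorski normally needs an ultrafilter to be prime over the countable family, and a plain ultrafilter extension might not preserve this. Here it is fine: the chain already decides every $\exists$-formula, with either a witness in the filter or the negation in the filter.

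Alternatively, to stay inside the paper's framework, I could reuse the Baire argument. The needed $\bpi 2$ conditions are dense in the closed set of ultrafilters, and the generalized Baire category theorem would then give a point. I will use the direct construction.

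Now define $M(\token{\chi})=\top$ iff $[\chi]\in U$, and check each axiom schema. Schemas 1 and 2 hold because $U$ is proper and contains $1$. Schema 3 holds because $U$ is a filter. Schema 4 holds because $U$ is prime. Schema 5 holds by the witness property. Schema 6 holds because $U$ is upward closed. So $M\in X_\calT$, with $M\vDash\token{\varphi}$ and $M\not\vDash\token{\psi}$, since $[\neg\psi]\in U$. The main obstacle is the witness-preserving ultrafilter step, specifically verifying that $[\exists x_i.\chi]$ really is the join of its instances in $B$. If the substitution bookkeeping becomes messy there, I would fall back on the Baire-category route.
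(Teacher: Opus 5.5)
Your argument is correct and is essentially the Rasiowa--Sikorski proof that the paper cites in place of its own (\cite{RS51}, Theorem~7.2 of \cite{S08}): the Lindenbaum algebra satisfies $[\exists x_i.\chi]=\bigvee_k[\chi[x_k/x_i]]$, a decreasing chain of nonzero elements below $[\varphi\wedge\neg\psi]$ decides every existential and yields an ultrafilter with the witness property, and the resulting point of $X_\calT$ refutes $\token{\varphi}\vdash^\calT\token{\psi}$ by soundness. The one slip is cosmetic: schema~6 gives the ``if'' direction (from $T$-provability to $\vdash^\calT$), not the forward one, but you assign the actual content of the two directions correctly.
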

See \cite{RS51} or Theorem~7.2 of \cite{S08} for the proof. Our approach is slightly different though. They use the fact that the Lindenbaum algebra of $T$ is a Boolean algebra, which corresponds via Stone duality to the clopen subsets of a Stone space. In Section~10 of \cite{S08}, the MacNeille completion is used to obtain a complete Heyting Algebra (i.e., a frame).  In contrast, we simply embed the Lindenbaum algebra of $T$ into the frame of opens of its Stone space, which is in fact the space $X_\calT$ of $\calT$-models.

Define a continuous map $f\colon X_\calT \to (\calS_\calL)_\Obj$ as follows (for $M\in X_\calT$):
\begin{itemize}
\item
$j \equiv_{f(M)} k$ if and only if $M \in \llbracket\token{x_j = x_k}\rrbracket^\calT$.
\item
$\llbracket P \rrbracket_{f(M)}(k_1,\ldots,k_n)$ holds if and only if $M \in \llbracket \token{P(x_{k_1},\ldots,x_{k_n}) }\rrbracket^\calT$.
\end{itemize}
It is easy to see that if $\varphi$ is a first-order $\calL$-sentence then $f(M) \vDash \varphi$ if and only if $M\in\llbracket \token{\varphi}\rrbracket^\calT$. The key part of the proof is that $\llbracket \token{\exists x_i.\varphi}\rrbracket^\calT \subseteq \bigcup_{k\in\IN} \llbracket \token{\varphi[x_k/x_i]}\rrbracket^\calT$, hence if $f(M) \vDash \exists x_i.\varphi$ then there is $k\in\IN$ such that $f(M)\vDash \varphi(k)$. In particular, $f(M)$ is a model of $T$ for each $M\in X_\calT$.

\begin{theorem}[G\"{o}del's Completeness Theorem]
If $T$ is a countable first-order theory, then every set theoretical model of $T$ satisfies $\varphi$ if and only if $\varphi$ is provable from $T$ in classical first-order logic.
\end{theorem}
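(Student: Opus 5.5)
The plan is to use the encoding $\calT$ and the map $f\colon X_\calT\to(\calS_\calL)_\Obj$ defined just above, and combine them with Theorem~\ref{thrm:prop_complete} and the preceding (Rasiowa--Sikorski) lemma. Soundness is the easy direction: every set theoretical model of $T$ satisfies every $T$-provable $\varphi$, by the usual induction on classical derivations. We may take $\varphi$ to be a sentence, replacing it by its universal closure if needed.

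For the converse, suppose $\varphi$ is not provable from $T$. Then $\top\Rightarrow\varphi$ is not $T$-provable. By the preceding lemma, $\token{\top}\not\vdash^\calT\token{\varphi}$. Since $\top\vdash\token{\top}$ is an axiom and $\token{\top}\vdash\top$ is valid, this gives $\top\not\vdash^\calT\token{\varphi}$. Theorem~\ref{thrm:prop_complete} then yields $X_\calT=\llbracket\top\rrbracket^\calT\not\subseteq\llbracket\token{\varphi}\rrbracket^\calT$, so we can choose $M\in X_\calT$ with $M\notin\llbracket\token{\varphi}\rrbracket^\calT$.

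Next we pass to the structure $f(M)$. The claim made before the statement gives $f(M)\vDash\psi$ if and only if $M\in\llbracket\token{\psi}\rrbracket^\calT$, for every first-order sentence $\psi$. Hence $f(M)$ is a model of $T$, since each axiom $\tau\in T$ gives the axiom $\top\vdash\token{\tau}$ via schema 6. Moreover $f(M)\not\vDash\varphi$. Since $f(M)$ is a countable structure, or empty, this contradicts the assumption that every model of $T$ satisfies $\varphi$.

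The main obstacle is justifying the claim $f(M)\vDash\psi\iff M\in\llbracket\token{\psi}\rrbracket^\calT$. I would prove it by induction on $\psi$, generalized to formulas with free variables interpreted by the numerals $k$. The statement to prove is: for $\psi(x_{k_1},\dots)$, $f(M)\vDash\psi(k_1,\dots)$ if and only if $M\vDash\token{\psi}$.

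The cases run as follows.
\begin{itemize}
\item Atomic cases hold by definition of $f$. We also need that $\equiv_{f(M)}$ is a PER and that predicates respect it; both follow from schema 6 applied to the equality axioms.
\item $\wedge$ and $\vee$ use schemas 3 and 4 together with the monotonicity instances of schema 6.
\item $\neg$ uses $\top\vdash\token{\psi}\vee\token{\neg\psi}$ and $\token{\psi}\wedge\token{\neg\psi}\vdash\token{\bot}\vdash\bot$, so that $M$ decides $\token{\psi}$ classically.
\item $\exists$ uses schema 5 for one direction and $\token{\psi[x_k/x_i]}\vdash\token{\exists x_i\psi}$ for the other, noting that the domain of $f(M)$ consists of those $k$ with $M\vDash\token{x_k=x_k}$, which is all $k$.
\item $\forall$ and $\Rightarrow$ reduce to the previous cases via the classical equivalences in schema 6.
\end{itemize}
A care point is checking that the substitution conventions in schema 5 match the satisfaction clause for $\exists$ in $\calS_\calL$.
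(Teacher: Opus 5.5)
Your proposal is correct and follows the paper's proof: you pass from non-provability of $\varphi$ to $\top\not\vdash^\calT\token{\varphi}$ via the Rasiowa--Sikorski lemma, pick $M\in X_\calT\setminus\llbracket\token{\varphi}\rrbracket^\calT$ by Theorem~\ref{thrm:prop_complete}, and use $f(M)$ as the countermodel. Your inductive sketch of $f(M)\vDash\psi\iff M\in\llbracket\token{\psi}\rrbracket^\calT$ fills in what the paper calls ``easy to see''; since schema 6 puts every $k$ in the domain, your ``or empty'' caveat is unnecessary.
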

\begin{proof}
If $\varphi$ is not provable from $T$ then $\top\not\vdash^\calT \token{\varphi}$, hence there is $M\in X_\calT$ such that $M\not\in \llbracket\token{\varphi}\rrbracket^\calT$. Then $f(M)$ is a model of $T$ and $f(M) \not\vDash \varphi$. 
\end{proof}

Since each first-order sentence $\varphi$ determines the clopen subset $\{M\in X_\calT \mid f(M)\vDash \varphi\}$ of $X_\calT$, the continuous map $f\colon X_\calT \to (\calS_\calL)_\Obj$ can be viewed as a Boolean-valued model. We investigate the more general Heyting-valued models in the next subsection.

\subsection{Heyting-valued models}

The following is a modification of Heyting-valued sets (see \cite{FS79}, C1.3.3 of \cite{J02},  and Definition~9.1 of \cite{S08}), where we have added countability assumptions.

\begin{definition}
Assume $A$ is a countably presented frame. An $A$-set is a countable set $M$ and a mapping $e\colon M\times M \to A$ such that for all $a,b,c\in M$:
\begin{itemize}
\item
$e(a,b)=e(b,a)$, and
\item
$e(a,b)\wedge e(b,c) \leq e(a,c)$.
\end{itemize}
An $n$-ary predicate on $M$ is a mapping $P\colon M^n\to A$ where
\[e(a_1,b_1)\wedge\cdots\wedge e(a_n,b_n)\wedge P(a_1,\ldots,a_n)\leq P(b_1,\ldots,b_n)\]
holds for all $a_1,\ldots,a_n,b_1,\ldots,b_n\in M$. $P$ is \emph{strict} if furthermore
\[P(a_1,\ldots,a_n) \leq e(a_1,a_1)\wedge\cdots\wedge e(a_n,a_n)\]
for all $a_1,\ldots,a_n\in M$.
\qed
\end{definition}

Let $X$ be quasi-Polish, and $f\colon X\to\ODS_\Obj$ a continuous map. Set 
\[M=\{a\in\IN \mid \text{$a\equiv a$ in $f(x)$ for some $x\in X$}\}.\]
For $a,b\in\IN$, define $e(a,b)$ to be the open subset of all $x\in X$  such that $a\equiv b$ in $f(x)$ (i.e, $e(a,b) = f^{-1}(\{ C\in\ODS_\Obj \mid a \equiv_C b\})$). Then $(M,e)$ is an $\PO(X)$-set. Similarly, a \emph{strict} $n$-ary predicate on $M$ is a continuous function $P\colon X\to\ODS_\Obj$ such that $P(x)$ is a subobject of $f(x)^n$ for each $x\in X$. Therefore, if we have a fixed countable language $\calL$, a continuous function from $X$ to $(\calS_\calL)_\Obj$ determines a $\PO(X)$-set $M$ and strict predicates on $M$ for each $P\in\calL$.

We next show that conversely, each $A$-set arises from a continuous function in this way. Assume $(M,e)$ is an $A$-set. We can assume $M= \IN$ by first embedding $M\hookrightarrow \IN$ and then extending $e$ to all of $\IN\times\IN$ by setting it to $\bot$ on pairs outside of $M\times M$. Furthermore, we can assume $A=\PO(X)$ for some quasi-Polish space $X$.

Now consider the propositional $\sigma$-coherent theory $\calT$, with propositional variables $V=\{\langle a,b\rangle \mid a,b\in\IN\}$ and axioms
\begin{itemize}
\item
$\langle a,b\rangle \vdash \langle b,a\rangle$
\item
$\langle a,b\rangle \wedge \langle b,c\rangle \vdash \langle a,c\rangle$
\end{itemize}
for each $a,b,c\in\IN$. This is the theory of partial equivalence relations on $\IN$, and it is easy to see that $\ODS_\Obj$ is the space of models of $\calT$, hence $\PO(\ODS_\Obj)$ is the frame presented by $\calT$. Since $e\colon V \to \PO(X)$ preserves the relations in $\calT$, Heckmann's spatiality result implies $e$ extends uniquely to a frame homomorphism $\hat{e}\colon\PO(\ODS_\Obj) \to \PO(X)$. Let $f\colon X\to\ODS_\Obj$ be the corresponding continuous function. Then for each $x\in X$ and $a,b\in\IN$, we have $x\in e(a,b)$ if and only if $a\equiv b$ in $f(x)$.

This extends to strict predicates as follows. Set
\[V' = V\cup\{ P_{a_1,\ldots, a_n} \mid P\in\calL \text{ and } a_1,\ldots,a_n \in\IN\},\]
where each $P_{a_1,\ldots, a_n}$ is a single new symbol (generator). Extend $\calT$ to $\calT'$ by adding the axioms
\begin{itemize}
\item
$\langle a_1,b_1 \rangle\wedge\cdots\wedge \langle a_n,b_n \rangle \wedge P_{a_1,\ldots,a_n}\vdash P_{b_1,\ldots,b_n}$
\item
$P_{a_1,\ldots,a_n} \vdash \langle a_1,a_1 \rangle\wedge\cdots\wedge \langle a_n,a_n\rangle$
\end{itemize}
for each $n$-ary $P\in\calL$ and $a_1,\ldots,a_n,b_1,\ldots,b_n\in M$. Then the space of models of $\calT'$ is homeomorphic to $(\calS_\calL)_\Obj$. Therefore, an $A$-set $(M,e)$ together with predicates on $(M,e)$ for each $P\in\calL$ determines a continuous function $f\colon X \to (\calS_\calL)_\Obj$ such that $f(x) \vDash P(a_1,\ldots,a_n)$ if and only if $x \in P(a_1,\ldots,a_n)$.

Non-strict predicates could be handled similarly if we weakened the requirement on $\calS_\calL$ that $\llbracket P\rrbracket_M$ be a subobject of $\overline{M}^n$. See Chapter~II of \cite{FS79} or Section~9 of \cite{S08} for applications to intuitionistic first-order logic.

\subsection{Forcing}

We briefly make some connections with forcing, as it is described in Section~16.D of Kechris \cite{ke95}. We use the fact from \cite{DPS} (see also \cite{dbr20}) that every quasi-Polish space is homeomorphic to the space of ideals $\I{\prec}$ of a transitive relation $\prec$ on $\IN$. $\I{\prec}$ has basic opens $[p]_\prec = \{I\in\I{\prec} \mid p\in I\}$ for $p\in\IN$.  Intuitively, the relation $\prec$ is an ``information ordering'' in the sense that $p\prec q$ means $q$ contains more information than $p$. However, $\prec$ is only required to be transitive, so it might not even be a preorder. For notational convenience, we sometimes write $q \succ p$ instead of $p \prec q$.

Fix a countable language $\calL$ (not necessarily set theory). We assume that we have an existence predicate $E(x)$ which is interpreted as $x \equiv_M x$ in each structure $M\in (\calS_\calL)_\Obj$ (see \cite{FS79,S08} for details, but note that we assume all predicates are strict).  Let $\prec$ be a transitive relation on $\IN$, and assume for simplicity that $p \prec q$ implies $[q]_\prec\not=\emptyset$. 

Assume we have a relation $p \Vdash_0 \alpha(\vec{a})$ defined on  tuples consisting of a number $p\in\IN$, an atomic formula $\vec{x}.\alpha$, and a compatible $\vec{a}\in\IN$. Further assume that $\vDash_0$ is monotonic in the sense that if $p \Vdash_0 \alpha(\vec{a})$ and $p\prec q$ then $q \Vdash_0 \alpha(\vec{a})$.

Define $f\colon\I{\prec}\to(\calS_\calL)_\Obj$ so that $f(I)$ is the smallest element of  $(\calS_\calL)_\Obj$ (with respect to the specialization order) such that 
\[f(I) \vDash \alpha(\vec{a}) \iff (\exists p\in I)\, p\Vdash_0 \alpha(\vec{a})\]
for each atomic $\alpha$ and compatible $a\in\IN^*$. It is easy to see that $f$ is well-defined and continuous.

Next, define the forcing relation $\Vdash$ on first-order formulas as follows:
\begin{itemize}
\item
$p \not\Vdash\bot$ and $p\Vdash\top$.
\item
$p\Vdash \alpha(\vec{a})$ if and only if $(\forall q \succ p)(\exists r \succ q)\, r \Vdash_0 \alpha(\vec{a})$, for atomic $\alpha$.
\item
$p\Vdash (\neg\varphi)(\vec{a})$ if and only if $(\forall q \succ p)\, q \not\Vdash \varphi$.
\item
$p\Vdash (\varphi \wedge\psi)(\vec{a})$ if and only if $p\Vdash\varphi(\vec{a})$ and $p\Vdash \psi(\vec{a})$.
\item
$p\Vdash (\varphi \vee \psi)(\vec{a})$ if and only if $(\forall q \succ p)(\exists r \succ q)(r\Vdash\varphi \text{ or } r\Vdash\psi)$.
\item
$p\Vdash (\varphi \Rightarrow \psi)(\vec{a})$ if and only if $(\forall q \succ p)(\exists r \succ q)(r\Vdash \neg\varphi \text{ or } r\Vdash\psi)$.
\item
$p\Vdash (\forall x.\varphi)(\vec{a})$ if and only if $(\forall b\in\IN)\, p \Vdash (E(x) \Rightarrow \varphi)(\vec{a},b)$.
\item
$p\Vdash (\exists x.\varphi)(\vec{a})$ if and only if $(\forall q \succ p)(\exists r \succ q)(\exists b\in\IN)\,p\Vdash (E(x)\wedge\varphi)(\vec{a},b)$.
\end{itemize}

Define the relation $M \vDash \varphi(\vec{a})$ as usual for classical first-order semantics. Note that for a (non-basic) first-order formula $\varphi$, the set $\{ M \mid M \vDash \varphi(\vec{a})\}$ is Borel but not necessarily open in $(\calS_\calL)_\Obj$. We have the following ``Truth Lemma'' for the forcing relation. 

\begin{lemma}
Assume $p\in\IN$,  $\vec{x}.\varphi$ is a first-order formula, and $\vec{a}\in\IN^*$ is compatible with $\vec{x}$. Then 
\[p\Vdash \varphi(\vec{a}) \iff f^{-1}(\{ M \mid M \vDash \varphi(\vec{a})\}) \text{ is comeager in } [p]_\prec.\]
In particular, for the ``generic'' $I\in\I{\prec}$ (i.e., for comeager many $I\in\I{\prec}$) and every $\varphi(\vec{a})$,
\[f(I)\vDash \varphi(\vec{a}) \iff (\exists p\in I)\, p\Vdash \varphi(\vec{a}).\] 
\end{lemma}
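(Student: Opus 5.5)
By induction on the complexity of $\varphi$, I will prove the first equivalence simultaneously for all $p$ and all compatible $\vec{a}$. Write $B_\varphi(\vec a)=f^{-1}(\{M\mid M\vDash\varphi(\vec a)\})$. Since $f$ is continuous and $\{M\mid M\vDash\varphi(\vec a)\}$ is Borel, $B_\varphi(\vec a)$ is Borel and so has the Baire property in $\I{\prec}$.

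Two facts about quasi-Polish spaces will be used throughout.
\begin{enumerate}
\item $\I{\prec}$ is a Baire space, and every nonempty open subset (in particular every $[q]_\prec$) is Baire. This is the generalized Baire category theorem already invoked in Theorem~\ref{thrm:prop_complete}.
\item For $p\prec q$ we have $[q]_\prec\subseteq[p]_\prec$, and the sets $[q]_\prec$ with $q\succ p$ cover $[p]_\prec$. The reason is that ideals are directed, so any $I\ni p$ contains some $q\succ p$.
\end{enumerate}
From these I get the standard Banach-category criterion: a Borel set $B$ is comeager in $[p]_\prec$ iff for every $q\succ p$ (with $[q]_\prec\neq\emptyset$) there is $r\succ q$ such that $B$ is comeager in $[r]_\prec$. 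Equivalently, $B$ is comeager in $[p]_\prec$ iff $B$ is not non-meager in any $[q]_\prec$, $q\succ p$, where the complement is comeager.

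For the \emph{atomic case}, by definition of $f$ the set $B_\alpha(\vec a)=\bigcup\{[r]_\prec\mid r\Vdash_0\alpha(\vec a)\}$ is open. An open set is comeager in $[p]_\prec$ iff it is dense there, that is, iff every $[q]_\prec$ with $q\succ p$ meets it. Meeting means some $I\ni q$ contains $r$ with $r\Vdash_0\alpha$. By directedness, together with monotonicity of $\Vdash_0$, this yields $r'\succ q$ with $r'\Vdash_0\alpha$. This is exactly the definition of $p\Vdash\alpha(\vec a)$. The cases $\bot$, $\top$ and $\wedge$ are immediate, since meager sets form a $\sigma$-ideal.

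For $\neg$, I use the criterion above. The set $B_{\neg\varphi}$ is the complement of $B_\varphi$. It is comeager in $[p]_\prec$ iff $B_\varphi$ is comeager in no $[q]_\prec$ with $q\succ p$: Baire property gives an $[r]_\prec$ where it is comeager if it is nonmeager somewhere. By the induction hypothesis this says $q\not\Vdash\varphi$ for all $q\succ p$. The cases $\vee$ and $\Rightarrow$ follow the same pattern. The set $B_{\varphi\vee\psi}$ is comeager in $[p]_\prec$ iff below every $q$ there is $r$ on which $B_\varphi$ or $B_\psi$ is nonmeager, and by the Baire property this can be refined to an $r'$ on which one of them is comeager. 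The $\Rightarrow$ case reduces to $\neg\varphi\vee\psi$.

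The quantifiers are the main obstacle, because they involve countable unions and intersections over $b\in\IN$. For $\forall$, $B_{\forall x\varphi}(\vec a)=\bigcap_b B_{E(x)\Rightarrow\varphi}(\vec a,b)$, since $M$ ranges over $\IN$ through $E$. A countable intersection is comeager iff each term is, which matches the definition. For $\exists$, $B_{\exists x\varphi}=\bigcup_b B_{E\wedge\varphi}(\vec a,b)$, and I apply the density criterion. If the union is nonmeager in $[q]_\prec$, then some term is nonmeager there, and hence comeager on some $[r]_\prec$ with $r\succ q$. This forces $r\Vdash(E\wedge\varphi)(\vec a,b)$. The clause as printed has $p\Vdash$ where I read $r\Vdash$, and I will prove the lemma with $r$. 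A similar care is needed in the atomic and $\neg$ clauses regarding empty $[q]_\prec$, which the hypothesis $p\prec q\Rightarrow[q]_\prec\neq\emptyset$ handles.

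For the \emph{``in particular'' statement}, let $C_{\varphi,\vec a}=B_\varphi(\vec a)\triangle\bigcup\{[p]_\prec\mid p\Vdash\varphi(\vec a)\}$. I will show that each $C_{\varphi,\vec a}$ is meager, and then intersect the complements over the countably many pairs $(\varphi,\vec a)$ to get a comeager set of generic $I$. By the first part, each $B_\varphi\setminus\bigcup$ misses every $[p]_\prec$ where $B_\varphi$ is comeager. By the Baire property, $B_\varphi$ differs from an open set $U$ by a meager set. Each basic $[p]_\prec\subseteq U$ has $B_\varphi$ comeager in it, so $p\Vdash\varphi$. Hence $B_\varphi\setminus\bigcup$ is meager. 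Conversely, $\bigcup\setminus B_\varphi$ is a countable union of meager sets, hence meager.
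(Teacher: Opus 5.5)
Your proposal is correct and follows essentially the same route as the paper: the atomic case via density of the open set $\bigcup\{[r]_\prec\mid r\Vdash_0\alpha(\vec a)\}$ in $[p]_\prec$, and the remaining connectives and quantifiers by induction using the Baire property. The paper only points to Section~8.G of Kechris for that induction; you fill it in correctly, rightly reading the $\exists$ clause with $r\Vdash$ in place of $p\Vdash$. The generic statement you obtain by showing that $B_\varphi(\vec a)\,\triangle\,\bigcup\{[p]_\prec\mid p\Vdash\varphi(\vec a)\}$ is meager, which is exactly the paper's use of $U(A)$ and Kechris's Theorem~8.29.

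The differences are cosmetic: in the atomic case you pick a point of the nonempty $[r']_\prec$ rather than building an ideal from a $\prec$-chain. Two small points, though. Your density criterion really needs the $[q]_\prec$ with $q\succ p$ to form a \emph{basis} of $[p]_\prec$, not merely a cover; the same directedness argument gives this. And the $\bot$ clause, like the paper's statement itself, tacitly assumes $[p]_\prec\neq\emptyset$.
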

\begin{proof}
We prove the first claim for atomic $\alpha$. Assume $p\Vdash \alpha(\vec{a})$. We show that $f^{-1}(\{ M \mid M \vDash \alpha(\vec{a})\})$ is dense in $[p]_\prec$, hence it is comeager because it is open. If $U\subseteq\I{\prec}$ is open and $U\cap[p]_\prec\not=\emptyset$, then there is $q\succ p$ such that $\emptyset\not=[q]_\prec \subseteq U$.  The assumption $p\Vdash \alpha(\vec{a})$ implies there is $r_0\succ q$ such that $r_0 \Vdash_0 \alpha(\vec{a})$. Transitivity implies $r_0\succ p$, so repeating the argument with $r_0$ replacing $q$ yields $r_1 \succ r_0$ with $r_1\Vdash_0 \alpha(\vec{a})$. Continuing in this way we obtain a $\prec$-increasing sequence $(r_i)_{i\in\IN}$. Set $I=\{r\in\IN\mid (\exists i\in\IN) r_i \prec r\}$. Then $I\in U\cap[p]_\prec$ and $f(I)\vDash \alpha(\vec{a})$. Therefore, $f^{-1}(\{ M \mid M \vDash \alpha(\vec{a})\})$ is dense in $[p]_\prec$.

Conversely, assume $f^{-1}(\{ M \mid M \vDash \alpha(\vec{a})\})$ is comeager, hence dense, in $[p]_\prec$. If $q\succ p$ then  $[q]_\prec\subseteq [p]_\prec$, and the assumption on $\prec$ implies $[q]_\prec\not=\emptyset$, so there is $I\in [q]_\prec$ with $f(I)\vDash\alpha(\vec{a})$. Fix $s\in I$ such that $s\Vdash_0\alpha(\vec{a})$. Since $I$ is directed there is $r\in I$ with $q\prec r$ and $s\prec r$, and the motonicity of $\vDash_0$ implies $r\Vdash_0 \alpha(\vec{a})$.

Thus the first claim holds for atomic formulas. Arbitrary first-order formulas are handled by induction using Baire category methods (see Section~8.G of \cite{ke95}).

For the second claim, set $A = \{ I\in\I{\prec} \mid f(I) \vDash \varphi(\vec{a})\}$. Let $U(A)$ be the set of all $I\in\I{\prec}$ such that there is an open neighborhood $U$ of $I$ with $A$ comeager in $U$. Using the first half of this lemma and the fact that sets of the form $[p]_\prec$ form a basis for $\I{\prec}$, we have $I \in U(A)$ if and only if $(\exists p\in I)\, p\Vdash\varphi(\vec{a})$. By Theorem~8.29 of \cite{ke95}, the symmetric difference $(A\setminus U(A)) \cup (U(A)\setminus A)$ is meager, which completes the proof of the lemma.
\end{proof}

\section{Sober categories}

Readers familiar with \cite{Chen19} will notice that Chen works with the quasi-Polish \emph{groupoid} of models $\calG_\calT$, which is the subcategory of $\calX_\calT$ whose morphisms are restricted to be model isomorphisms, rather than arbitrary homomorphisms\footnote{Our $\calG_\calT$ corresponds to the groupoid from Example~6.9 in \cite{Chen25}, which is a slight modification of the groupoid of models in \cite{Chen19}. This modification extends the completeness result of \cite{Chen19} so that inequality ($\not=$) is not needed as a basic symbol.}. Technically, \cite{Chen19} shows that $\ODS^{\calG_\calT}$ is the $\sigma$-pretopos presented by $\calT$. In this section, we justify our claim in the introduction that $\ODS^{\calX_\calT}$ is presented by $\calT$ by showing that $\ODS^{\calX_\calT}$ and $\ODS^{\calG_\calT}$ are isomorphic $\sigma$-pretoposes (even topologically). 

Every continuous functor from $\calX_\calT$ to $\ODS$ can have its domain restricted to $\calG_\calT$, but going the other direction is slightly more complicated. Given a continuous functor $F\colon \calG_\calT \to \ODS$, its extension $\hat{F}\colon \calX_\calT \to \ODS$ can behave like $F$ on objects. The behavior of $\hat{F}$ on morphisms needs more care, but a natural extension is possible because the specialization order on $(\calG_\calT)_\Obj$ contains information about quotient objects and subobjects (as defined in Section~\ref{subsec_CatOfMods}), and continuous functors must preserve the specialization order. 

Let $h\colon A\to B$ be a homomorphism in $\calX_T$. For $a\in A$ and $b\in B$ (recall this means $a \equiv_A a$ and $b \equiv_B b$), we write $h(a)=b$ to mean that $h$ sends the equivalence class of $a$ to the equivalence class of $b$. Define $A'$ as follows:
\begin{itemize}
\item
$a \equiv_{A'} b$ if and only if $a,b\in A$ and $h(a) = h(b)$,
\item
$\llbracket R \rrbracket_{A'}(b_1, ... , b_n)$ if and only if there exist $a_1,...,a_n  \in A$ such that $a_i \equiv_{A'} b_i$ for each $i \leq n$ and $\llbracket R\rrbracket_A(a_1,...,a_n)$.
\end{itemize}
Clearly $\equiv_A \subseteq \equiv_{A'}$ and $\llbracket R \rrbracket_A \subseteq \llbracket R \rrbracket_{A'}$ for each predicate symbol $R$, hence $A \leq A'$, where $\leq$ is the specialization order on $\calX_\calT$. Next, define $B'$ as follows:
\begin{itemize}
\item
$b \equiv_{B'} c$ if and only if $b \equiv_B c$ and there is $a \in A$ such that $h(a)=b$,
\item
$\llbracket R \rrbracket_{B'}(b_1, ... , b_n)$ if and only if there exist $a_1,...,a_n  \in A$ such that $h(a_i)= b_i$ for each $i \leq n$ and $\llbracket R\rrbracket_A(a_1,...,a_n)$.
\end{itemize}
Clearly $\equiv_{B'}\subseteq\equiv_B$, and $\llbracket R \rrbracket_{B'} \subseteq \llbracket R \rrbracket_{B}$ holds for each predicate symbol $R$ because $h$ is a homomorphism, hence $B' \leq B$. There is an obvious isomorphism $i\colon A' \cong B'$ induced by $h$, hence we obtain a factorization of $h$ as $A \leq A' \cong B' \leq B$. Since $F$ is continuous, $F_\Obj(A) \leq F_\Obj(A')$ and $F_\Obj(B') \leq F_\Obj(B)$ in $\ODS_\Obj$, and there are morphisms $e \colon F_\Obj(A) \to F_\Obj(A')$ and $e' \colon F_\Obj(B') \to F_\Obj(B)$ in $\ODS_\Mor$ corresponding to these embeddings. Therefore, we obtain a functor $\hat{F}$ by defining $\hat{F}_\Mor(h\colon A\to B)$ as the composition $e'\circ F_\Mor(i) \circ e$. The mapping $F \to \hat{F}$ from $(\ODS^{\calX_\calT})_\Obj$ to $(\ODS^{\calG_\calT})_\Obj$ is continuous (assuming the functor categories have the compact-open topologies), hence $(\ODS^{\calX_\calT})_\Obj$ and $(\ODS^{\calG_\calT})_\Obj$ are homeomorphic.

Next we must show that $(\ODS^{\calX_\calT})_\Mor$ and $(\ODS^{\calG_\calT})_\Mor$ are homeomorphic. An element of $(\ODS^{\calX_\calT})_\Mor$ is a tuple $\langle \eta, F, G\rangle$ where $F,G\colon \calX_\calT \to \ODS$ are continuous functors and $\eta\colon (\calX_\calT)_\Obj \to \ODS_\Mor$ is a continuous function determining a natural transformation from $F$ to $G$. $(\ODS^{\calG_\calT})_\Mor$ is defined similarly. Again, going from $(\ODS^{\calX_\calT})_\Mor$ to $(\ODS^{\calG_\calT})_\Mor$ is easy. In the other direction, since $(\calX_\calT)_\Obj$ and $(\calG_\calT)_\Obj$ are equal, we only need to check that if $\langle \eta, F, G\rangle$ is in $(\ODS^{\calG_\calT})_\Mor$ then $\eta$ is still a natural transformation from $\hat{F}$ to $\hat{G}$. This requires showing that if $h\colon A\to B$ is a homomorphism, then $\eta(B) \circ \hat{F}(h) = \hat{G}(h) \circ \eta(A)$. Now if $A \leq A'$ then the continuity of $\eta$ implies $\eta(A) \leq \eta(A')$ in $\ODS_\Mor$, which means the graph of the function $\eta(A)$ is included in the graph of $\eta(A')$, hence the composition $F(A) \stackrel{\eta_{A}}{\to} G(A)\leq G(A')$ equals $F(A) \leq F(A') \stackrel{\eta_{A'}}{\to} G(A')$. It then follows from the definition of $\hat{F}$ and the naturality of $\eta$ (with respect to isomorphisms) that $\eta(B) \circ \hat{F}(h) = \hat{G}(h) \circ \eta(A)$. Therefore, $(\ODS^{\calX_\calT})_\Mor$ and $(\ODS^{\calG_\calT})_\Mor$ are homeomorphic.

It follows that $\ODS^{\calX_\calT}$ and $\ODS^{\calG_\calT}$ are (topologically) isomorphic $\sigma$-pretoposes. In many applications it would probably be more convenient to use the groupoid $\calG_\calT$ instead of the category $\calX_\calT$. However, we next show that if we follow the analogy between spaces and toposes then $\calX_\calT$ should be viewed as the ``sobrification'' of $\calG_\calT$. 

To see this, first recall how sobriety is defined for spaces. If $X$ is a space, then $X$ is sober precisely when the embedding $X \hookrightarrow \sierp^{\sierp^X}$, defined as $x \mapsto \lambda f.f(x)$, is a bijection between the points of $X$ and the frame homomorphisms from $\sierp^X$ to $\sierp$. A frame homomorphism is a (continuous) function that preserves arbitrary joins (colimits) and finite meets (limits). For countably based spaces, it suffices that it preserves countable joins rather than arbitrary joins.

Similarly, we show that $\calX_\calT$ is ``sober'' in the sense that it is equivalent to the category of continuous functors $\calF\colon \ODS^{\calX_\calT} \to \ODS$ that preserve countable colimits and finite limits. We call a continuous functor between topological $\sigma$-pretoposes that preserves countable colimits and finite limits a \emph{$\sigma$-pretopos morphism}.

First, we define a functor\footnote{We use $\y$ because of the similarities with the double powerspace monad.} $\y$ from $\calX_\calT$ to $\ODS^{\ODS^{\calX_\calT}}$, the category of continuous functors from $\ODS^{\calX_\calT}$  to $\ODS$. Formally, we will work in the cartesian closed category $\QCB$, and then it will be clear that $\y$ will be a continuous functor. Readers unfamiliar with $\QCB$ can assume the compact-open topology on all function spaces (this follows from the consonance of quasi-Polish spaces, but we omit the details).

The functor $\y_\Obj(M)\colon \ODS^{\calX_\calT} \to \ODS$ is defined for each object $M$ of $\calX_\calT$ as follows:
\begin{itemize}
\item
$(\y_\Obj(M))_\Obj(F)=F_\Obj(M)$ for each continuous functor $F\colon \calX_\calT \to \ODS$, and
\item
$(\y_\Obj(M))_\Mor(\eta\colon F\to G)$ is defined as $\eta(M)\colon F_\Obj(M)\to G_\Obj(M)$  for each continuous natural transformation $\eta \colon F\to G$. 
\end{itemize}
The natural transformation $\y_\Mor(h)\colon \y_\Obj(M) \to \y_\Obj(M')$ is defined for each homomorphism $h\colon M\to M'$ in $\calX_\calT$ as $\y_\Mor(h)(F)=F_\Mor(h)$. This is a natural transformation because the diagram on the left commutes for every natural transformation $\eta\colon F\to G$ (on the right is the same diagram but relabeled).
\begin{center}
\begin{tikzcd}
F_{\Obj}(M) \arrow{r}{F_{\Mor}(h)}\arrow[swap]{d}{\eta(M)}  & F_{\Obj}(M') \arrow{d}{\eta(M')} \\
G_{\Obj}(M) \arrow{r}{G_{\Mor}(h)} & G_{\Obj}(M')
\end{tikzcd}
\qquad
\begin{tikzcd}
\y(M)(F) \arrow{r}{\y(h)(F)}\arrow[swap]{d}{\y(M)(\eta)}  & \y(M')(F) \arrow{d}{\y(M')(\eta)} \\
\y(M)(G) \arrow{r}{\y(h)(G)} & \y(M')(G)
\end{tikzcd}
\end{center}

We show that $\y$ is full and faithful.

Let $h,h'\colon M\to M'$ be morphisms in $\calX_\calT$ such that $\y_\Mor(h)=\y_\Mor(h')$. Then \[\overline{h}=\llbracket x.\top\rrbracket_\Mor(h) = \y_\Mor(h)(\llbracket x.\top\rrbracket^\calT) =  \y_\Mor(h')(\llbracket x.\top\rrbracket^\calT) =\overline{h'},\]
hence $h=h'$. Therefore, $\y$ is faithful.

Next we show $\y$ is full. Let $\varepsilon\colon \y_\Obj(M) \to \y_\Obj(M')$ be a continous natural transformation. Setting $\overline{h} = \varepsilon(\llbracket x.\top\rrbracket^\calT)$, we have that $\overline{h}\colon \overline{M}\to\overline{M'}$ is a morphism of $\ODS$. We must show it is a homomorphism.

First note that if $\vec{x}$ is a context of $n$ variables, then for each $i\leq n$ there is a natural transformation $\eta_i \colon \llbracket \vec{x}.\top \rrbracket^\calT \to \llbracket x_i.\top\rrbracket^\calT$ defined for each $\calT$-model $M''$ as the $i$th projection $\pi_i\colon (\overline{M''})^n \to \overline{M''}$. Thus $\y_\Obj(M)(\eta_i) \colon \y_\Obj(M)(\llbracket \vec{x}.\top\rrbracket^\calT) \to \y_\Obj(M)(\llbracket x_i.\top\rrbracket^\calT)$ is the projection $\pi_i\colon \overline{M}^n \to \overline{M}$, hence the following diagram commutes for each $i\leq n$ by the naturality of $\varepsilon$ and the definition of $\overline{h}$:
\begin{center}
\begin{tikzcd}[column sep=huge]
(\overline{M})^n \arrow{r}{\varepsilon(\llbracket \vec{x}.\top\rrbracket^\calT)}\arrow[swap]{d}{\pi_i}  & (\overline{M'})^n \arrow{d}{\pi_i} \\
\overline{M} \arrow{r}{\overline{h}} & \overline{M'}
\end{tikzcd}
\end{center}
It follows that $\overline{h}^n=\varepsilon(\llbracket \vec{x}.\top\rrbracket^\calT)$. 

Next, for each $\calL$-formula $\vec{x}.\varphi$ there is a natural transformation $\eta_\varphi \colon \llbracket \vec{x}.\varphi \rrbracket^\calT \to \llbracket \vec{x}.\top\rrbracket^\calT$ defined for each $M''$ as the embedding $\llbracket \vec{x}.\varphi \rrbracket^\calT(M'') \leq (\overline{M''})^n$, and we obtain the following commutative diagram:
\begin{center}
\begin{tikzcd}[column sep=huge]
\llbracket \vec{x}.\varphi\rrbracket^\calT(M) \arrow{r}{\varepsilon(\llbracket \vec{x}.\varphi\rrbracket^\calT)}\arrow[d, phantom, sloped, "\leq"]  & \llbracket \vec{x}.\varphi\rrbracket^\calT(M') \arrow[d, phantom, sloped, "\leq"] \\
(\overline{M})^n \arrow{r}{\overline{h}^n} & (\overline{M'})^n
\end{tikzcd}
\end{center}
In particular, $\llbracket P\rrbracket_M(a_1,\ldots,a_n)$ implies $\llbracket P\rrbracket_{M'}(\overline{h}(a_1),\ldots,\overline{h}(a_n))$ for each $P\in\calL$, hence $\overline{h}$ determines a morphism $h\colon M\to M'$.

It remains to show that $\y_\Mor(h)=\varepsilon$. This amounts to showing that $\varepsilon(F)=F_\Mor(h)$ for each $F\colon \calX_\calT\to\ODS$. It follows from the previous commutative diagram that $\varepsilon(\llbracket \vec{x}.\varphi\rrbracket^\calT) = \llbracket \vec{x}.\varphi\rrbracket^\calT(h)$ for each $\calL$-formula $\varphi$.

Now let $\calD$ be a countable diagram in $\ODS^{\calX_\calT}$, and assume $\varepsilon(F)=F_\Mor(h)$ for each $F$ in $\calD$. Let $L$ be the colimit of $\calD$ with cocone given by the morphism (i.e., natural transformation) $\eta_F \colon F\to L$ for each $F$ in $\calD$. Since colimits in $\ODS^{\calX_\calT}$ are computed objectwise, $L_\Obj(M)$ is the colimit of the diagram $\calD_M$ in $\ODS$ obtained by evaluating everything in $\calD$ at $M$ (i.e., $\calD_M$ is the image of $\calD$ under the functor $\y_\Obj(M)$). The family of morphisms $\eta_F(M')\circ \varepsilon(F) \colon F(M) \to L(M')$ (for $F$ in $\calD$) form a cocone of $\calD_M$, and $\varepsilon(L)\colon L_\Obj(M)\to L_\Obj(M')$ satisfies $\varepsilon(L)\circ \eta_F(M) = \eta_F(M')\circ\varepsilon(F)$. But $\varepsilon(F)=F_\Mor(h)$ by assumption, and $L_\Mor(h)\circ \eta_F(M) = \eta_F(M')\circ F_\Mor(h)$ by the naturality of $\eta_F$. Therefore, $\varepsilon(L)=L_\Mor(h)$ by the universal property of colimits.

The dual version of the above argument shows that if $\calD$ is a finite diagram in $\ODS^{\calX_\calT}$ and $\varepsilon(F)=F_\Mor(h)$ for each $F$ in $\calD$, then the limit $L$ of $\calD$ satisfies $\varepsilon(L)=L_\Mor(h)$.

By Theorem~8.1 of \cite{Chen19}, reinterpreted as a result about functors by using \cite{dbr26}, every continuous functor $F\colon \calX_\calT \to\ODS$ can be constructed from functors of the form $\llbracket \vec{x}.\varphi\rrbracket^\calT$ by using countable colimits and finite limits. Therefore, $\varepsilon(F)=F_\Mor(h)$ for each $F\colon \calX_\calT\to\ODS$, hence $\y_\Mor(h)=\varepsilon$. This completes the proof that $\y$ is full.

It is easy to see that $\y_\Obj(M)$ is a $\sigma$-pretopos morhism because colimits and limits are defined objectwise. We show that conversely every $\sigma$-pretopos morphism is isomorphic to a functor in the range of $\y$. Let $\calF\colon \ODS^{\calX_\calT} \to \ODS$ be a $\sigma$-pretopos morphism. Set $\overline{M}_\calF = \calF_\Obj(\llbracket x.\top\rrbracket^\calT)$, and for $n$-ary $P\in\calL$ fix a context $\vec{x}$ with $n$ variables and define $\llbracket P \rrbracket_{M_\calF} = \calF_\Obj(\llbracket \vec{x}.P(\vec{x})\rrbracket^\calT)$. Since for each $M$ we have $\llbracket \vec{x}.P(\vec{x})\rrbracket^\calT(M) \leq \llbracket \vec{x}.\top\rrbracket^\calT(M)$ (the specialization order on $\ODS$), we have $\llbracket \vec{x}.P(\vec{x})\rrbracket^\calT \leq \llbracket \vec{x}.\top\rrbracket^\calT$ (the specialization order on $\ODS^{\calX_\calT}$), hence\footnote{Technically, the equalities are only natural isomorphisms, but we omit the details for clarity.}
\[\llbracket P \rrbracket_{M_\calF} =  \calF_\Obj(\llbracket \vec{x}.P(\vec{x})\rrbracket^\calT) \leq  \calF_\Obj(\llbracket \vec{x}.\top\rrbracket^\calT)=  \calF_\Obj((\llbracket x.\top\rrbracket^\calT)^n) = (\overline{M}_\calF)^n,\]
because $\calF_\Obj$ preserves the specialization order and finite products. Therefore, $M_\calF=(\overline{M}_\calF,\llbracket\cdot\rrbracket_{M_\calF})$ is an $\calL$-structure.

Next, $\llbracket \vec{x}.x_i = x_j\rrbracket^\calT$ is the pullback of the projections $\pi_i,\pi_j\colon (\llbracket x.\top\rrbracket^\calT)^n \to \llbracket x.\top\rrbracket$ in $\ODS^{\calX_\calT}$, and $\calF_\Obj$ preserves finite limits, hence  $\calF_\Obj(\llbracket \vec{x}.x_i = x_j\rrbracket^\calT)$ is the pullback of the projections $\pi_i,\pi_j\colon (\overline{M}_\calF)^n \to \overline{M}_\calF$, which is $\llbracket \vec{x}.x_i = x_j\rrbracket_\Obj( M_\calF)$. Similarly (see Section~10 of \cite{Chen19}), 
\begin{itemize}
\item
$\llbracket \vec{x}.\varphi \wedge \psi\rrbracket^\calT$ is the pullback of $\llbracket \vec{x}.\varphi \rrbracket^\calT$ and $\llbracket \vec{x}.\psi\rrbracket^\calT$.
\item
$\llbracket \vec{x}.\bigvee_{i\in\IN} \varphi_i\rrbracket^\calT$ is a quotient of the coproduct of $\llbracket \vec{x}. \varphi_i\rrbracket^\calT$ ($i\in\IN$).
\item
$\llbracket \vec{x}.(\exists y)  \varphi\rrbracket^\calT$ is the image of the composition $\llbracket \vec{x},y.  \varphi\rrbracket^\calT \leq \llbracket \vec{x},y.\top\rrbracket^\calT\to \llbracket \vec{x}.\top\rrbracket^\calT$, where the last morphism is the projection.
\end{itemize}
Since $\calF_\Obj$ preserves the above structure, we obtain $\calF_\Obj(\llbracket \vec{x}.\varphi\rrbracket^\calT) = \llbracket \vec{x}.\varphi\rrbracket_\Obj( M_\calF)$ for each formula $\vec{x}.\varphi$.

If $\sigma \vdash_{\vec{y}} \theta$ is an axiom of $\calT$, then $\llbracket \vec{y}.\sigma \rrbracket^\calT \leq \llbracket \vec{y}.\theta \rrbracket^\calT$ and the continuity of $\calF_\Obj$ implies 
\[\llbracket \vec{y}.\sigma\rrbracket_\Obj( M_\calF) = \calF_\Obj(\llbracket \vec{y}.\sigma \rrbracket^\calT) \leq \calF_\Obj(\llbracket \vec{y}.\theta \rrbracket^\calT)=\llbracket \vec{y}.\theta\rrbracket_\Obj( M_\calF).\] 
Thus $M_{\calF}$ is a $\calT$-model. 

Finally, by another application of Theorem~8.1 of \cite{Chen19}, every continuous functor $F\colon \calX_\calT \to\ODS$ can be constructed from functors of the form $\llbracket \vec{x}.\varphi\rrbracket^\calT$ using countable colimits and finite limits, hence $\calF_\Obj(F) = F_\Obj(M_\calF)$. 

Next we show $\calF_\Mor(\eta)=\eta(M_\calF)$ for each natural transformation $\eta\colon F\to G$. Let $Gr(\eta)\colon \calX_\calT\to\ODS$ be the graph of $\eta$, which is the subobject of $F\times G$ corresponding to the image of the morphism $\langle 1_F, \eta\rangle \colon F\to F\times G$. Thus $\eta$ factors as the composition of the image $F\to Gr(\eta)$ with the subobject embedding $Gr(\eta)\leq F\times G$ and the projection $F\times G \to G$. Therefore, $\calF_\Mor(\eta)$ factors as 
\[F_\Obj(M_\calF)=\calF_\Obj(F)\to \calF_\Obj(Gr(\eta))\leq \calF_\Obj(F\times G)\to\calF_\Obj(G)=G_\Obj(M_\calF).\] 
But since $\calF$ preserves images and products, the above composition is the factorization of $\eta(M_\calF)$ through its graph $(Gr(\eta))_\Obj(M_\calF)=\calF_\Obj(Gr(\eta))$, which proves $\calF_\Mor(\eta)=\eta(M_\calF)$.

Therefore, $\y_\Obj(M_\calF) = \calF$ (up to natural isomorphism). This proves the following.

\begin{theorem}\label{thm:XTsober}
$\y\colon \calX_\calT\to \ODS^{\ODS^{\calX_\calT}}$ restricts to a continuous equivalence between $\calX_\calT$ and the category of $\sigma$-pretopos morphisms from $\ODS^{\calX_\calT}$ to $\ODS$.
\qed
\end{theorem}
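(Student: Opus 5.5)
The plan is to assemble the theorem from the pieces established just before its statement, organised as the standard criterion: a functor is an equivalence onto a full subcategory when it is full, faithful, and essentially surjective onto that subcategory. I will do this in four steps.

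\emph{Continuity.} Work in $\QCB$, where the exponential $\ODS^{\ODS^{\calX_\calT}}$ exists. The object part $\y_\Obj$ is the transpose of the evaluation map $(\calX_\calT)_\Obj\times(\ODS^{\calX_\calT})_\Obj\to\ODS_\Obj$, $(M,F)\mapsto F_\Obj(M)$. The morphism part $\y_\Mor$ is the transpose of $(h,F)\mapsto F_\Mor(h)$. Both evaluations are continuous by cartesian closedness, so both transposes are continuous. Since quasi-Polish spaces are consonant, these $\QCB$ topologies agree with the compact-open ones.

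\emph{Landing in the right subcategory.} Each $\y_\Obj(M)$ is a $\sigma$-pretopos morphism. The reason is that countable colimits and finite limits in $\ODS^{\calX_\calT}$ are computed objectwise, and $\y_\Obj(M)$ is just evaluation at $M$.

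\emph{Full and faithful.} This was shown above. Faithfulness follows by evaluating at $\llbracket x.\top\rrbracket^\calT$. For fullness, one builds $h$ from $\varepsilon(\llbracket x.\top\rrbracket^\calT)$. One then propagates the equality $\varepsilon(F)=F_\Mor(h)$ from the formula functors to all of $\ODS^{\calX_\calT}$ through countable colimits and finite limits, using Chen's generation result (Theorem~8.1 of \cite{Chen19}).

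\emph{Essential surjectivity.} Given a $\sigma$-pretopos morphism $\calF$, form the model $M_\calF$ defined above. The steps are:
\begin{enumerate}
\item By induction on formulas, using preservation of pullbacks, countable coproducts, quotients and images, show $\calF_\Obj(\llbracket\vec{x}.\varphi\rrbracket^\calT)\cong\llbracket\vec{x}.\varphi\rrbracket_\Obj(M_\calF)$.
\item Monotonicity of $\calF_\Obj$ in the specialization order then makes $M_\calF$ satisfy every axiom, so $M_\calF$ is a $\calT$-model.
\item Extend the isomorphism $\calF_\Obj(F)\cong F_\Obj(M_\calF)$ to all $F$ by the same generation theorem.
\item Match $\calF_\Mor(\eta)$ with $\eta(M_\calF)$ by factoring $\eta$ through its graph.
\end{enumerate}
Together these give $\calF\cong\y_\Obj(M_\calF)$.

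I expect the main obstacle to be the coherence hidden in the footnote ``equalities are only natural isomorphisms''. The isomorphisms $\calF_\Obj(F)\cong F_\Obj(M_\calF)$ must be chosen naturally in $F$ so that they assemble into a natural isomorphism $\calF\cong\y_\Obj(M_\calF)$. My first attempt would be to define them canonically from the colimit/limit presentations supplied by Theorem~8.1 of \cite{Chen19}. Independence of the chosen presentation would then follow from the universal properties. Naturality in $\eta$ would come from the graph factorization, since $\calF$ preserves images and products.
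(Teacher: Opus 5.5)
Your proposal is correct and follows the paper's own argument essentially step for step. The paper likewise gets continuity of $\y$ from $\QCB$, shows each $\y_\Obj(M)$ is a $\sigma$-pretopos morphism because (co)limits are computed objectwise, proves faithfulness and fullness via $\llbracket x.\top\rrbracket^\calT$ and Chen's generation theorem, and proves essential surjectivity via $M_\calF$ and the graph factorization; the coherence of the isomorphisms, which you rightly flag, is relegated to a footnote in the paper. The one item the paper adds is that the quasi-inverse $\calF\mapsto M_\calF$ is itself continuous, since it is evaluation of $\calF$ at the fixed functors $\llbracket x.\top\rrbracket^\calT$ and $\llbracket \vec{x}.P(\vec{x})\rrbracket^\calT$. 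That is what makes the equivalence continuous in both directions, and you should include it.
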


Note that the equivalence is continuous in both directions because the mapping $\calF \mapsto M_\calF$ in the above proof is continuous. Therefore, $\calX_\calT$ is a ``sober'' quasi-Polish category which is completely determined by the topological $\sigma$-pretopos $\ODS^{\calX_\calT}$. Since $\ODS^{\calX_\calT}$ and $\ODS^{\calG_\calT}$ are isomorphic $\sigma$-pretoposes, and these are analogous to the ``frames'' of $\calX_\calT$ and $\calG_\calT$, it is natural to view $\calX_\calT$ as the ``sobrification'' of $\calG_\calT$. The analogy extends to the duality between frame homomorphisms and continuous functions as follows.

\begin{theorem}
Let $\calT_1$ and $\calT_2$ be countable $\sigma$-coherent theories over countable languages $\calL_1$ and $\calL_2$, respectively. There is a dual equivalence between continuous functors $F\colon \calX_{\calT_1} \to \calX_{\calT_2}$ and $\sigma$-coherent morphisms $\calF\colon \ODS^{\calX_{\calT_2}} \to \ODS^{\calX_{\calT_1}}$.
\end{theorem}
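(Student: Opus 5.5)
The plan is to build the two directions, precomposition and evaluation, and then use Theorem~\ref{thm:XTsober} to show they are mutually inverse up to natural isomorphism.

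\emph{From functors to $\sigma$-pretopos morphisms.} Given a continuous functor $F\colon \calX_{\calT_1}\to\calX_{\calT_2}$, define $F^*\colon \ODS^{\calX_{\calT_2}}\to\ODS^{\calX_{\calT_1}}$ by precomposition, $F^*(G)=G\circ F$ and $F^*(\eta)=\eta F$ (whiskering). Working in $\QCB$, where composition is continuous, $F^*$ is a continuous functor. It preserves countable colimits and finite limits because these are computed objectwise in both functor categories. Hence $F^*$ is a $\sigma$-pretopos morphism. A natural transformation $F\Rightarrow F'$ induces a natural transformation $F^*\Rightarrow F'^*$ by whiskering in the other variable, which gives the contravariance on 2-cells.

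\emph{From $\sigma$-pretopos morphisms to functors.} Given a $\sigma$-pretopos morphism $\calF\colon \ODS^{\calX_{\calT_2}}\to\ODS^{\calX_{\calT_1}}$, compose with $\y$ for $\calT_1$. For each $M\in\calX_{\calT_1}$, the functor $\y_\Obj(M)\circ\calF\colon \ODS^{\calX_{\calT_2}}\to\ODS$ is again a $\sigma$-pretopos morphism, since both factors preserve countable colimits and finite limits and are continuous. By Theorem~\ref{thm:XTsober} applied to $\calT_2$, it is isomorphic to $\y_\Obj(N)$ for a $\calT_2$-model $N=M_{\y(M)\circ\calF}$. Concretely, $\overline{N}=\calF(\llbracket x.\top\rrbracket^{\calT_2})_\Obj(M)$ and $\llbracket P\rrbracket_N=\calF(\llbracket\vec{x}.P(\vec{x})\rrbracket^{\calT_2})_\Obj(M)$. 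This construction gives a functor $\hat{\calF}\colon\calX_{\calT_1}\to\calX_{\calT_2}$, namely the composite of $\y$ with postcomposition by $\calF$ and with the continuous inverse $\calF'\mapsto M_{\calF'}$ from the remark after Theorem~\ref{thm:XTsober}. On morphisms, $h\colon M\to M'$ is sent through $\y_\Mor(h)$ whiskered by $\calF$, and then through fullness and faithfulness of $\y$. Explicitly, $\hat{\calF}(h)$ is $\calF(\llbracket x.\top\rrbracket^{\calT_2})_\Mor(h)$, which is a homomorphism by the argument in the fullness proof. Continuity follows because each step is a $\QCB$-definable continuous map.

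\emph{Inverse equivalences.} For $F$, we need $\widehat{F^*}\cong F$. The underlying set of $\widehat{F^*}(M)$ is $\llbracket x.\top\rrbracket^{\calT_2}(F(M))=\overline{F(M)}$, and the predicates agree likewise, so $\widehat{F^*}(M)=F(M)$ strictly, and also on morphisms. For $\calF$, we need $\hat{\calF}^*\cong\calF$, i.e. $G(\hat{\calF}(M))\cong\calF(G)_\Obj(M)$ naturally in $G$ and $M$. This is exactly the isomorphism $\y_\Obj(\hat{\calF}(M))\cong\y_\Obj(M)\circ\calF$ from Theorem~\ref{thm:XTsober}. I expect the main obstacle to be checking that these pointwise isomorphisms are natural in $M$ and assemble continuously. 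For this I would again reduce to the generators $\llbracket\vec{x}.\varphi\rrbracket^{\calT_2}$. On these generators the isomorphisms are identities up to canonical product and subobject identifications. They then extend to all $G$ by Chen's Theorem~8.1 (as used earlier), since both $\calF$ and $\hat{\calF}^*$ preserve the countable colimits and finite limits generating $G$, and the colimit/limit uniqueness argument from the fullness proof forces naturality. Finally, functoriality on natural transformations on both sides uses the full faithfulness of $\y$, giving the dual equivalence.
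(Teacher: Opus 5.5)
Your proposal is correct and follows essentially the same route as the paper. One direction is precomposition $F\mapsto F^*$. The other sends $\calF$ to $C\mapsto \y_\Obj(C)\circ\calF$ (the paper's $\widehat{\calF}$) followed by the inverse of the equivalence in Theorem~\ref{thm:XTsober}, and the two round trips come from $\widehat{\calF_F}=\y\circ F$ and $\widehat{\calF}\cong\y\circ F_\calF$.

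The naturality in $M$ that you flag as the main obstacle is automatic. Whiskering the natural isomorphism $\y\circ M_{(-)}\cong\mathrm{id}$ of that equivalence with $\widehat{\calF}$ already gives $\y\circ F_\calF\cong\widehat{\calF}$ naturally in $C$, so your extra reduction to generators via Chen's Theorem~8.1 is unnecessary. A terminological point: whiskering preserves the direction of 2-cells, so there is no "contravariance on 2-cells"; it is the 1-cells that the duality reverses.
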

\begin{proof}
To simplify notation, set $\calC=\calX_{\calT_1}$ and $\calD=\calX_{\calT_2}$. Given $F\colon \calC\to\calD$ define $\calF_F\colon \ODS^\calD \to \ODS^\calC$ as:
\begin{itemize}
\item
$(\calF_F)_\Obj(G) = G\circ F$, for $G\colon \calD\to\ODS$.
\item
$(\calF_F)_\Mor(\eta\colon G\to H)$ is the natural transformation $\varepsilon \colon G\circ F \to H\circ F$ given by $\varepsilon(C)=\eta(F_\Obj(C))$ for each $C\in\calC_\Obj$.
\end{itemize}
Clearly $\calF_F$ is a $\sigma$-coherent morphism because colimits and limits are computed objectwise.

Next, given $\calF\colon \ODS^\calD \to \ODS^\calC$, first define $\widehat{\calF}\colon \calC\to \ODS^{\ODS^\calD}$ as:
\begin{itemize}
\item
$\widehat{\calF}_\Obj(C)\colon \ODS^{\calD}\to\ODS$ is defined for $C\in\calC_\Obj$ as:
\begin{itemize}
\item
$(\widehat{\calF}_\Obj(C))_\Obj(G) = (\calF_\Obj(G))_\Obj(C)$ for $G\colon \calD\to\ODS$.
\item
$(\widehat{\calF}_\Obj(C))_\Mor(\eta\colon G\to H) = \calF_\Mor(\eta)(C)$.
\end{itemize}
\item
$\widehat{\calF}_\Mor(f\colon C\to C')$ is the natural transformation $\varepsilon\colon \widehat{\calF}_\Obj(C)\to \widehat{\calF}_\Obj(C')$ defined as $\varepsilon(G) = (\calF_\Obj(G))_\Mor(f)$ for each $G\colon \calD\to\ODS$.
\end{itemize}
$\widehat{\calF}_\Obj(C)$ is a $\sigma$-coherent morphism for each $C\in\calC_\Obj$ because $\calF$ is a $\sigma$-coherent morphism and colimits and limits in $\ODS^\calD$ are computed objectwise. Therefore, we can compose $\widehat{\calF}$ with the equivalence from Theorem~\ref{thm:XTsober} to get a continuous functor $F_\calF\colon\calC\to\calD$.

Going from $F\colon \calC\to\calD$ to $\calF_F\colon\ODS^\calD \to \ODS^\calC$ to $\widehat{\calF_F}\colon \calC\to \ODS^{\ODS^\calD}$ as above, we see that $\widehat{\calF_F} = \y\circ F$. Applying the equivalence from Theorem~\ref{thm:XTsober} shows that $F_{\calF_F}$ is naturally isomorphic to $F$.

Conversely, go from $\calF\colon\ODS^\calD \to \ODS^\calC$ to $\widehat{\calF}\colon \calC\to \ODS^{\ODS^\calD}$ to $F_\calF\colon \calC\to\calD$ as above. $\widehat{\calF}$ and $\y\circ F_\calF$ are naturally isomorphic by definition of $F_\calF$. For $G\colon \calD\to\ODS$ and $C\in\calC_\Obj$ and $\eta\colon G\to H$ we have:
\begin{eqnarray*}
(\calF_{F_\calF})_\Obj(G)(C) &=& G_\Obj((F_\calF)_\Obj(C))\\
&=& ((\y\circ F_\calF)_\Obj(C))_\Obj(G)\\
&\cong& (\widehat{\calF}_\Obj(C))_\Obj(G)\\
&=& (\calF_\Obj(G))_\Obj(C)
\end{eqnarray*}
and
\begin{eqnarray*}
(\calF_{F_\calF})_\Mor(\eta)(C) &=& \eta((F_\calF)_\Obj(C))\\
&=& ((\y\circ F_\calF)_\Obj(C))_\Mor(\eta)\\
&\cong& (\widehat{\calF}_\Obj(C))_\Mor(\eta)\\
&=& \calF_\Mor(\eta)(C).
\end{eqnarray*}
Therefore, $\calF_{F_\calF}$ is naturally isomorphic to $\calF$.
\end{proof}

We can define $\y$ similarly for any topological category $\calC$ in $\QCB$, and call $\calC$ \emph{sober} (with respect to $\ODS$) if $\y\colon \calC \to \ODS^{\ODS^\calC}$ is an equivalence with the subspace of $\sigma$-pretopos morphisms. We conjecture that the ``sober'' quasi-Polish categories are exactly the spaces of models of countable predicate $\sigma$-coherent theories (up to continuous equivalence). 

The mapping $\calC \mapsto \ODS^{\ODS^\calC}$ should extend to a monad on the category of quasi-Polish categories, and then the theory could be developed in a way similar to Taylor's Abstract Stone Duality \cite{Taylor02}, but we leave this for future research.

It would also be interesting to define the embedding $\y$ using pretoposes other than $\ODS$, and investigate the corresponding notion of ``sobriety''. An interesting example of a quasi-Polish pretopos other than $\ODS$ is the quasi-Polish category $\CHS$ of compact Hausdorff quasi-Polish spaces. $\CHS$ has countable limits and finite colimits, but it does not have all countable colimits, hence it is not a $\sigma$-pretopos. What is the ``$\ODS$-sobrification'' of $\CHS$? What is the ``$\CHS$-sobrification'' of $\ODS$?

\section*{Acknowledgments}

The author thanks Ruiyuan Chen for helpful discussions and comments on an earlier version of this paper. This work was supported by JSPS Bilateral Program Number JPJSBP120265001.


\bibliographystyle{amsplain}
\bibliography{myrefs}


\end{document}